\documentclass[a4paper,fleqn]{cas-sc}
\usepackage{enumitem}
\usepackage{booktabs}
\usepackage{subcaption}
\usepackage{float}
\usepackage{amsmath}
\usepackage{amsthm}
\usepackage[authoryear,longnamesfirst]{natbib}

\def\tsc#1{\csdef{#1}{\textsc{\lowercase{#1}}\xspace}}
\tsc{WGM}
\tsc{QE}

\newtheorem{theorem}{Theorem}
\newtheorem{lemma}[theorem]{Lemma}

\newdefinition{definition}{Definition}
\newdefinition{rmk}{Remark}
\newdefinition{example}{Example}
\newproof{pf}{Proof}

\begin{document}
\let\WriteBookmarks\relax
\def\floatpagepagefraction{1}
\def\textpagefraction{.001}

\shorttitle{Static MFG for Wind Farm Siting}    

\shortauthors{}  

\title [mode = title]{A Static Mean Field Game for Optimal Renewable Energy Investment}  

\tnotemark[1] 


\tnotetext[1]{This document is the result of the research project partially funded by the Transdisciplinary Scholarship Connector Grants University of Calgary and Sezer's NSERC Discovery Grant RGPIN-2025-05373.}

%

\author[1]{Deniz Sezer}



\ead{adsezer@ucalgary.ca}

\ead[url]{}

\credit{Conceptualization, Methodology, Supervision, Writing – review and editing, Project administration, Funding acquisition}

\affiliation[1]{organization={Department of Mathematics and Statistics, University of Calgary},
            addressline={2500 University Dr NW}, 
            city={Calgary},
            postcode={T2N 1N4}, 
            state={Alberta},
            country={Canada}}

\author[1]{Shanukie Vithana}

\cormark[1]


\ead{shanukie.vithana@ucalgary.ca}


\credit{Data curation, Methodology, Software, Validation, Formal analysis, Investigation, Writing – original draft, Writing – review and editing}

\affiliation[1]{organization={Department of Mathematics and Statistics, University of Calgary},
            addressline={2500 University Dr NW}, 
            city={Calgary},
            postcode={T2N 1N4}, 
            state={Alberta},
            country={Canada}}

\author[2]{Sara Hastings-Simon}


\ead{sara.hastingssimon@ucalgary.ca}


\credit{Conceptualization, Validation, Investigation, Resources, Writing – review and editing }

\affiliation[2]{organization={Department of Earth, Energy, and Environment, University of Calgary},
            addressline={2500 University Dr NW}, 
            city={Calgary},
            postcode={T2N 1N4}, 
            state={Alberta},
            country={Canada}}

\author[1]{Souren Iwazian,}



\ead{souren.iwazian@ucalgary.ca}


\credit{Data curation, Methodology, Software, Formal analysis, Investigation, Validation, Writing - Original Draft}

\affiliation[1]{organization={Department of Mathematics and Statistics, University of Calgary},
            addressline={2500 University Dr NW}, 
            city={Calgary},
            postcode={T2N 1N4}, 
            state={Alberta},
            country={Canada}}




\begin{abstract}
Mean Field Game (MFG) theory provides a mathematical framework for modeling the strategic behavior of a large number of interacting agents whose individual decisions are influenced by the aggregate behavior of the population. This framework is well-suited for studying decentralized investment decisions in competitive electricity markets, where the actions of individual wind farm developers collectively shape market outcomes.
We develop a static MFG formulation for optimal wind energy capacity siting in Alberta, Canada. The revenue model for agents is based on the expected wind resource at a location and the covariance of this wind resource with other locations. Agents choose locations to maximize long-run revenue while accounting for spatial correlations in wind resource availability. Spatial wind dependence is incorporated through an empirically calibrated covariance structure, combining a PCA-based component estimated from historical weather station data with a parametric residual kernel, to capture variability across locations.
The equilibrium investment problem can be formulated as a quadratic program, which we solve for four policy scenarios that progressively restrict the feasible siting area, incorporating viewscape and transmission constraints beyond a baseline of minimal siting restrictions. These policy and infrastructure-driven land use restrictions are represented as constraints on the agents' action space, allowing us to examine their economic implications through a comparison of the MFG equilibria across policy scenarios. Our findings highlight the trade-off between regulatory land-use objectives and economic efficiency in the use of renewable energy resources in the short and long term, and provide quantitative insights into how policy design shapes the spatial distribution and financial performance of wind investments. The static MFG formulation could be used to inform electricity system planning including, transmission planning and renewable energy development planning.

\end{abstract}


\begin{highlights}
\item A novel spatial mean field game framework for wind farm siting under policy constraints.
\item Spatial diversification for maximizing revenue via a covariance penalty.
\item Estimate revenue at any location in Alberta based solely on wind resource.
\item Optimizing wind farm distribution and guiding future transmission and renewable planning.
\end{highlights}

\begin{keywords}
 \sep Mean Field Games \sep Electricity markets  \sep Potential Games \sep Quadratic Programming
\end{keywords}

\maketitle

\section{Introduction}\label{}

As the global community strives to achieve net-zero emissions by 2050, a goal set by the Paris Agreement, the energy sector faces a critical transformation. In the electricity sector, this transition requires a significant shift from fossil fuel generation to lower-emitting sources, such as wind and solar power. However, these weather-dependent resources are inherently variable in time and space, introducing a different type of complexity in their planning and operation. The optimal location for construction of wind and solar infrastructure in an electricity grid depends not only on the renewable potential at a location but also the correlation of energy generation at other sites, particularly as the share of variable renewable energy generation grows. This is the Merit Order Effect which is the principle used in Alberta's electricity market which prioritizes energy sources based on their cost. It arranges energy sources in ascending order of their supply offers. Typically renewable energy producers together with co-generators bid price zero and can be found at the bottom of the curve and the rest of the conventional producers are stacked in ascending order of their offer prices. The system operator dispatches the low marginal cost generators first to meet the exogenous demand and the residual demand (defined as the difference between total electricity demand and aggregate renewable generation) is met by the conventional producers. This ensures the most economical use of available energy resources. In Alberta’s deregulated electricity market, all generators are paid the spot price for the power they supply. The market clearing price or the spot price is the offer price of the most expensive generator that is needed to meet the demand and all generators that were dispatched are paid this price. 

Therefore, a location with high average wind speed may not always be ideal if it is highly correlated with other sites, since simultaneous high production across sites reduces the value of the generation to the grid. In competitive markets this is seen through suppression of electricity prices and diminish revenues. On the other hand, a site with moderate wind potential but a lower correlation may offer higher value and thus higher revenues. The siting of new wind and solar infrastructure can be further constrained by the availability of existing transmission system interconnections or the ability to build new transmission infrastructure.

In Alberta, recent policy developments limit the options for renewable energy site selection by removing significant areas from potential renewable energy development. The provincial government has introduced regulations restricting renewable energy projects on Class I and II (and in some cases Class III) agricultural lands and in pristine viewscapes. A full list of imposed restrictions is available in the official report by \citet{AlbertaRestrictions}. These policy changes may lead to suboptimal wind and solar farm distribution, which could lead to more expensive energy production, reduced renewable energy development, and different distribution of benefits from development. The magnitude of these impacts depends on the overlap between the restricted areas and the optimal areas for development. To evaluate the impacts, we propose a spatial Mean Field Game (MFG) framework to optimize wind farm siting with and without restrictions. Each agent in our model chooses a location to build a wind farm based on its expected revenue, which depends on both local wind resources and spatial correlations with other farms. This framework enables us to study equilibrium outcomes and assess the implications of land-use policies, ultimately informing more sustainable and economically viable energy planning throughout the province.

In classical MFGs, agent dynamics are typically modeled as diffusion processes over time, allowing for tractable equilibrium analysis. However, in problems like wind farm siting, it is more natural to model the environment using spatial processes that capture the geographic structure and correlation of renewable resources across locations. Existing work on spatial statistics, such as the low-rank approximations and hierarchical models developed by \citet{Wikle2019} and the foundational treatment in \citet{Cressie2011} provides the tools to characterize these dependencies. Yet, there remain gaps in incorporating spatial covariance structures directly into the strategic reward functions of MFGs. Our work addresses this by embedding a spatial covariance-informed reward in a static MFG framework, allowing us to study how geographic correlations influence equilibrium patterns of renewable energy investment. 

An important yet often overlooked aspect of renewable integration is the role of spatial diversification in smoothing wind energy production. While it is well established that geographic dispersion can reduce aggregate variability through the averaging effect, \citet{VariabilityGenerators}, its impact on revenue outcomes has received less attention. Recent work, such as \citet{SpatialDivProfit} examines spatial differentiation in wind turbine profitability but does not explicitly model how covariance structures between sites shape revenue risks. Our work fills this gap by incorporating a spatial covariance penalty into the reward function, enabling direct analysis of how clustering affects both production variability and the value of the generation expressed through financial returns. 

Several recent studies have made substantial contributions to spatial planning for renewable energy deployment. For instance, \citet{Elkadeem2025}, proposed a high-resolution spatio-temporal decision-making model (STDMM), to optimize the placement of PhotoVoltic (PV), wind, and hybrid systems in Saudi Arabia using ERA5 reanalysis data and over 20 spatial criteria. Their model integrates techno-economic potential and infrastructure costs, offering a comprehensive feasibility analysis for hybrid systems. However, their framework primarily centers on cost minimization and resource complementarity, with no explicit consideration of revenue maximization, market risks, or investor behavior.  Moreover, their multi-criteria decision-making (MCDM) approach relies on expert opinion and does not account for strategic investor actions or spatial generation correlations that might affect market outcomes.

\citet{Younes2023} present a geospatial-assisted multi-criteria decision-making (MCDM) model to assess the geographical-technical-economic (GTE) potential of solar photovoltaic (PV) and onshore wind turbine (WT) systems in Egypt. Their analysis employs 16 restrictive and evaluation criteria, spanning climatological, technical, economic, environmental, and social dimensions, to identify suitable sites at a high resolution of 1 $km^2$. The study provides detailed estimates of energy yields, demonstrating that optimal solar and wind sites could meet Egypt’s projected electricity demand by 2030. Additionally, it calculates the levelized cost of electricity (LCOE), for onshore wind, highlighting their cost competitiveness with conventional generation. While the work adds depth to national-scale siting analyses, it remains focused on static site suitability and LCOE, and does not consider inter-site wind output correlations or revenue implications in a deregulated market environment.

\citet{Soysal2025} analyzes the financial feasibility of wind power investments in Denmark under full market exposure, emphasizing how price volatility, carbon pricing, and merit-order effects influence the cost of capital. Using machine learning-based electricity price forecasts, the study demonstrates revenue erosion due to output-price correlations and identifies limits to carbon pricing as a driver of wind investments. However, the model treats location as exogenous and does not account for spatial heterogeneity or siting decisions.

Our work bridges these domains by embedding spatial covariance structures which govern both generation variability and market price exposure, directly into a strategic investment model. Unlike GIS-MCDM approaches, we model agents as optimizing revenue under endogenous interactions captured through a Mean Field Game (MFG) framework. And in contrast to market-based investment models, we treat location as a strategic choice variable, enabling analysis of equilibrium spatial patterns in the presence of land-use constraints and policy restrictions.

Building on these existing spatial planning approaches, this paper addresses the critical challenge of optimizing wind farm siting under spatial and policy constraints by developing a novel spatial Mean Field Game (MFG) framework that explicitly incorporates geographic covariance structures into investment decisions. Our approach advances existing methods in three key ways: First, we introduce a spatially informed reward function that captures how inter-site wind generation correlations affect revenue, filling a gap in renewable energy planning literature. Secondly, we empirically validate our model using Alberta wind data and power market data, achieving high explanatory power, $R^2$ = 75\% for expected capacity factor estimation, 90\% spatial variability captured through a hybrid PCA-parametric covariance approach and 84\% for the proposed revenue model. Third, we demonstrate how the resulting equilibrium solutions can guide policymakers and investors in balancing land use restrictions with optimal spatial diversification.  The significance of this analysis is that it enables us to estimate potential revenue at any location in Alberta based solely on wind resource characteristics. This can serve as a valuable tool for assessing the profitability of prospective wind farm sites, optimizing wind farm distribution, and guiding future transmission line development.

\section{\label{sec:level2} Static Mean Field Game Formulation}

\subsection{\label{sec:level2.1}Setup of the problem}

In a static game, all agents choose their actions simultaneously. We assume that the conventional supply is exogenous to the system. This simplifies the model by allowing us to focus on how renewable agents interact with each other and affect prices through residual demand. 

Consider $N$ investors who choose an action $(c, X)$ in a finite action space $A$.
$$A=\{c^1,...,c^k\}\times\{X^1,...,X^n\}.$$
\noindent
We assume that $\{c^1,...,c^k\}$ are uniformly spaced points that partition the interval $[\tilde{c},\tilde{C}]$, where $\tilde{c},\tilde{C}>0$ and $\{X^1,..,X^n\}$ are uniformly spaced grid points in a bounded domain in $\mathbb{R}^2$. The action $(c,X)$ means that an agent invests \textbf{$\frac{c}{N}$} units of capacity at the location $X$. Therefore, agent $i$'s chosen capacity $C_i=\frac{c_i}{N}$ is between a minimum of $\frac{\tilde{c}}{N}$ and maximum of $\frac{\tilde{C}}{N}$.  With this parameterization, the total capacity invested is in the interval  $[\tilde{c},\tilde{C}]$.
\begin{equation}\label{eq:Total_capacity_constraint}
   \tilde{c} \leq \sum_{i=1}^{N}C_i \leq \tilde{C}.
\end{equation}

To prevent unlimited investment in a single location, we introduce a density constraint as follows:
\begin{equation}\label{eq:density_constraint}
    \int_{\tilde{c}}^{\tilde{C}}c\mu^N(dc,X^j)\leq C_{max},
\end{equation}
where $\mu^N(dc,X_j)=\frac{1}{N}\sum_{i=1}^{N}\delta_{c_i}(dc).1_{\{X_i=X_j\}}$.  Indeed, if equation \eqref{eq:density_constraint} is true, and $(c_i,X_i)$ denotes the action of the $i^{th}$ investor, we have:
$$\sum_{i=1}^{N} \frac{c_i}{N} 1_{\{X_i=X^j\}} \leq C_{max},$$
for all $X^j\in \{X^1,...,X^n\}$. Hence $C_{max}$ is interpreted as the maximum total capacity that can be built at a single location.

Next, we formulate the revenue of each agent. Each agent $i$ maximizes a reward function $J_i^N : A^N\longrightarrow \mathbb{R}$, which assigns a payoff based on the agent's action and the actions of all the other agents. For the agent $i$ with the action $(c_i,X_i)$, the yearly average revenue proposed is:
\begin{equation}
    Revenue_i=\lambda \frac{c_i}{N}E[W(X_i)]-\sigma \sum_{j=1}^{N}\frac{c_i}{N}Cov(W(X_i),\frac{c_j}{N}W(X_j)),
\end{equation}

where $W(x)$ is the capacity factor(wind resource density) at location $x$. The capacity factor is defined as
\begin{equation}\label{eq:CapacityFactorEq}
    W(x) := \frac{\text{Power Generated(MW)}}{\text{Capacity(MW)}}.
\end{equation}

Equivalently, every agent optimizes 
\begin{equation}
    F((c_i,X_i),\mu^N)=\lambda {c_i} E[W(X_i)]-\sigma \sum_{j=1}^{N}{c_i}Cov(W(X_i),\frac{c_j}{N}W(X_j))
\end{equation}

We assume that $\lambda$ and $\sigma$ are independent of $\mu$, and we have the following interpretations. 

\textbf{Interpretation of $\lambda$:} $\lambda$ is the revenue scaling coefficient. This means with higher $\lambda$, power generation is more profitable and vice versa. Therefore, the agents prioritize locations with higher $E(W(x))$ when $\lambda$ is large. Note that $cE[W(x)]=$ Expected power output in MW at capacity $c$. Since the revenue formulation is in $\$$, the units for $\lambda$ is in $\$/MW$. 

\textbf{Interpretation of $\sigma$:} $\sigma$ is the clustering penalty coefficient. A higher $\sigma$ applies a stronger penalty for clustering and for lower $\sigma$, agents may ignore spatial correlations since there is negligible revenue impact. $Cov(cW(x),cW(x'))$ gives units of $MW^2$, hence $\sigma$ takes units $\$/MW^2$ for dimensional consistency.

While we presently treat $\lambda$ and $\sigma$ as fixed, they may in fact vary with the population distribution $\mu$. For instance, increased renewable penetration could depress market prices, effectively reducing $\lambda$ or increase competition and clustering, making $\sigma$ more relevant. 

For the rest of the discussion, we will use the notation $\tilde{\mu}$ to denote the measure on $\{X^1,...,X^n\}$ defined by   $\int_{\tilde{c}}^{\tilde{C}} c \mu(dc,dx)$ for any probability measure $\mu$ on $A$. Note that
$$F((c_i,X_i),\mu^N)=c_i \tilde{F}(X_i,\tilde{\mu}^N),$$
where $\tilde{\mu}_N(dx)=\int_{\tilde{c}}^{\tilde{C}}c \mu^N(dc,dx)$, and 
\begin{equation}
    \tilde{F}(X,\tilde{\mu}):= \lambda E[W(X)] -\sigma \int Cov(W(X),W(X'))\tilde{\mu}(dX).
\end{equation}

\begin{definition}[MFG Solution]\label{def:MFG_solution}
A probability measure $\mu$ on $A$ is called an MFG solution if:
\begin{enumerate}[label=(\roman*)]
    \item $\sum_{[c^j\in \mathbb{C}]}c^j\mu(c^j,x)\leq C_{max}$ where $\mathbb{C}=\{c^1,...,c^k\}$, $\forall x \in \{X^1,...,X^n\}$. 
    \item For any $x\in X_{max}:=\{x:\sum c^j\mu(c^j,x)=C_{max}\}$ and $c$ such that $\mu(c,x)>0$:
    $$F((c,x),\mu)\geq F((c',y),\mu)\quad \forall\, (c',y)  \mbox{ such that}  \sum c^i \mu(c^i,y)<C_{max}.$$ and
    $$F((c,x),\mu)\geq F((c',x),\mu)\quad \forall\, (c',x)  \mbox{ such that} \, c'<c$$
    \item For any $x\notin X_{max}$ and $c$ such that $\mu(c,x)>0$:
    $$F((c,x),\mu)=\sup_{[c'\in \mathbb{C}, y'\notin X_{max}]} F((c',y'),\mu) $$
\end{enumerate}
\end{definition}

Next, we show that an MFG solution is obtained as the limit of a Nash equilibrium of an $N$-player game, as $N\rightarrow\infty$.  To make this precise, let $\mu^n$ be the empirical distribution of the actions of $N$ agents.  Let $\mu_N ^i [b]$ denote the probability measure obtained by replacing the action of the $i^{th}$ agent with the action $b$.  We say that $\mu_N ^i [b]$ is feasible if it satisfies $\sum_c c\mu_N ^i [b](c,x)\leq C_{max}$, for all $x$.

\begin{definition}[\texorpdfstring{$\epsilon_N$-Nash equilibrium}{Epsilon_N-Nash equilibrium} for the N-player game]\label{def:epsilon_N-Nash}
For any positive sequence of numbers $(\epsilon_N)_{N=0}^{\infty}$, a sequence of action profiles $(a_1^N,...,a_N^N)_{N=0}^\infty$ is called an $\epsilon_N$-Nash equilibrium for the $N$-player game if it satisfies for every $N$:
$$F((a_i^N,\mu^N)\geq F(b,\mu_N ^i [b])-\epsilon_N \quad \forall\, b \mbox{ such that } \mu_N ^i[b] \mbox{ is feasible}.$$  
\end{definition}

The following theorem justifies the definition of an MFG solution.
\begin{theorem}\label{thm:epsilon_N-Nash}
Let $(\epsilon_N)_{N=1}^{\infty}$ be a sequence of positive numbers such that $\lim_{N\rightarrow \infty} \epsilon_N=0$.  Let $(a_1^N,...,a_N^N)$ be $\epsilon_N$-Nash. Then any limit point $\mu$ of  $\mu^N$ as $N \rightarrow\infty$ is an MFG solution.
\end{theorem}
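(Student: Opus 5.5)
We argue along a subsequence with $\mu^{N}\to\mu$ weakly. Since $A$ is finite, this is simply pointwise convergence $\mu^N(c,x)\to\mu(c,x)$ for every $(c,x)\in A$. The first step is to record the continuity we need. $F((c,x),\mu)=c\,\tilde F(x,\tilde\mu)$, and $\tilde F(x,\tilde\mu)=\lambda E[W(x)]-\sigma\sum_{y}Cov(W(x),W(y))\tilde\mu(y)$ is affine in the finitely many numbers $\tilde\mu(y)=\sum_j c^j\mu(c^j,y)$. Hence $F((c,x),\cdot)$ is Lipschitz on probability measures on $A$ with respect to total variation, with constant $L=\tilde C\,\sigma\,\tilde C\max_{x,y}|Cov(W(x),W(y))|$. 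A unilateral deviation changes $\mu^N$ by at most $2/N$ in total variation, so
$$|F(b,\mu^i_N[b])-F(b,\mu^N)|\le 2L/N,$$
uniformly in $b$ and $i$. Consequently the $\epsilon_N$-Nash inequality becomes
$$F(a_i^N,\mu^N)\ge F(b,\mu^N)-\epsilon_N-2L/N$$
for all $b$ such that $\mu^i_N[b]$ is feasible.

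Condition (i) follows by passing to the limit in the feasibility inequalities $\sum_j c^j\mu^N(c^j,x)\le C_{max}$. These hold since the equilibrium profiles are feasible, and a finite sum of pointwise convergent terms converges.

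For (ii) and (iii), fix $(c,x)$ with $\mu(c,x)>0$. For large $N$ we have $\mu^N(c,x)>\mu(c,x)/2>0$, so some agent $i$ plays $a_i^N=(c,x)$. Let $(c',y)$ be a competitor action satisfying the relevant slack condition in the limit, namely $\tilde\mu(y)<C_{max}$, or $y\notin X_{max}$ in case (iii). The key claim is that the deviation $b=(c',y)$ is feasible for agent $i$ for all large $N$. At location $y\neq x$ the new load is $\tilde\mu^N(y)+c'/N$. Because $\tilde\mu^N(y)\to\tilde\mu(y)<C_{max}$, this is $<C_{max}$ eventually, and removing agent $i$ from $x$ only decreases the load at $x$. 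If $y=x$, the load changes by $(c'-c)/N$. This is nonpositive when $c'<c$, which yields the second inequality in (ii). When $x\notin X_{max}$ it is eventually below $C_{max}$ by the same slack argument, which is what (iii) needs for $y=x$.

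Applying the reduced Nash inequality and letting $N\to\infty$ with continuity of $F$ gives $F((c,x),\mu)\ge F((c',y),\mu)$ for all such competitors. In case (iii) we take the supremum over $c'\in\mathbb C$ and $y\notin X_{max}$. Equality holds because $(c,x)$ itself is an admissible competitor when $x\notin X_{max}$.

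The main obstacle is the feasibility-of-deviation step at the boundary. Strict slack $\tilde\mu(y)<C_{max}$ in the limit is exactly what makes the perturbation $c'/N$ absorbable. If the limit is saturated, $\tilde\mu^N(y)$ may approach $C_{max}$ from below or be equal to it, and deviations there need not be feasible. This is why Definition~\ref{def:MFG_solution} only compares against unsaturated locations, and the proof must check carefully that every inequality demanded in (ii)--(iii) involves only such locations, or same-location capacity reductions. One further subtlety is to handle the case in which $\mu^N(y)$ equals $C_{max}$ exactly for finite $N$ while the limit is strictly smaller. This case is covered because strict limit slack forces strict finite-$N$ slack eventually.
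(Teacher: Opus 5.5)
Your proof is correct and follows essentially the same route as the paper's. Both arguments show that deviations to locations with strict slack in the limit, and same-site capacity reductions, are feasible for large $N$; both then apply the $\epsilon_N$-Nash inequality and pass to the limit using continuity of $F$, with equality in (iii) because $(c,x)$ is itself an admissible competitor. The differences are minor: you use a single agent playing $(c,x)$ together with an explicit $O(1/N)$ Lipschitz bound, whereas the paper averages the Nash inequality over all agents at $(c,x)$. You also get feasibility directly from $\tilde\mu(y)<C_{max}$, which makes the paper's auxiliary set $A_N$ unnecessary; its margin $\delta/N$, with $\delta\le\tilde c$, would not by itself absorb a deviation of size $c'/N$.
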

The proof of Theorem \ref{thm:epsilon_N-Nash} is given in Appendix~\ref{app_proofs}.

\begin{theorem}\label{thm:MFE}
Let $\mu$ be an MFG solution.  Let $m$ be a probability measure such that $\tilde{m}$ satisfies (1) and $\sum_x \tilde{m}(x)= \sum_x \tilde{\mu}(x)$. Then,
\begin{equation}
    \sum_{c,x} F((c,x,\mu)m(c,x) \leq \sum_{c,x} F(c,x,\mu)\mu(c,x) \label{mfgineq}
\end{equation}
\end{theorem}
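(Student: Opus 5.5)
The plan is to reduce \eqref{mfgineq} to a statement about the per-unit reward $\tilde{F}$ and the capacity measures $\tilde{\mu},\tilde{m}$, and then prove that statement with a threshold (exchange) argument. Since $F((c,x),\mu)=c\,\tilde{F}(x,\tilde{\mu})$, summing over $c$ turns the left side of \eqref{mfgineq} into $\sum_x \tilde{F}(x,\tilde{\mu})\,\tilde{m}(x)$. The same computation turns the right side into $\sum_x \tilde{F}(x,\tilde{\mu})\,\tilde{\mu}(x)$. Note that $\tilde{F}(\cdot,\tilde{\mu})$ is a fixed function of $x$ here, because it depends only on $\mu$, not on $m$. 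So it suffices to show
\[
\sum_x \tilde{F}(x,\tilde{\mu})\bigl(\tilde{m}(x)-\tilde{\mu}(x)\bigr)\le 0 .
\]

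Next I would extract from Definition~\ref{def:MFG_solution} a threshold description of $\tilde{F}$. First dispose of the degenerate case in which every grid point lies in $X_{max}$. Then $\tilde{\mu}(x)=C_{max}$ for all $x$, while $\tilde{m}(x)\le C_{max}$ and the totals agree, which forces $\tilde{m}=\tilde{\mu}$ and gives equality. Otherwise set
\[
\kappa:=\max_{y\notin X_{max}}\tilde{F}(y,\tilde{\mu}),
\]
which is a maximum over a finite nonempty set. The two cases are as follows.
\begin{itemize}
\item If $x\notin X_{max}$ and $\mu(c,x)>0$ for some $c$, then condition (iii) tested with $c'=c$ gives $c\tilde{F}(x)\ge c\tilde{F}(y)$ for all $y\notin X_{max}$. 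Dividing by $c>0$ yields $\tilde{F}(x)=\kappa$.
\item If $x\in X_{max}$, then $\tilde{\mu}(x)=C_{max}>0$, so some $c$ has $\mu(c,x)>0$. The first inequality of condition (ii), again with $c'=c$, gives $\tilde{F}(x)\ge\tilde{F}(y)$ for every $y\notin X_{max}$, hence $\tilde{F}(x)\ge\kappa$.
\end{itemize}

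Then, using $\sum_x\tilde{m}(x)=\sum_x\tilde{\mu}(x)$, I would rewrite the target as
\[
\sum_x \bigl(\tilde{F}(x)-\kappa\bigr)\bigl(\tilde{m}(x)-\tilde{\mu}(x)\bigr)
\]
and show that each term is nonpositive, splitting the grid into three sets.
\begin{itemize}
\item On $X_{max}$: the first factor is $\ge0$, and the second is $\le0$ because $\tilde{m}(x)\le C_{max}=\tilde{\mu}(x)$.
\item On points outside $X_{max}$ charged by $\tilde{\mu}$: the first factor vanishes.
\item On points outside $X_{max}$ with $\tilde{\mu}(x)=0$: the first factor is $\le0$ by the definition of $\kappa$, and the second factor equals $\tilde{m}(x)\ge0$.
\end{itemize}
Summing the three groups gives the inequality.

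The only delicate step is reading conditions (ii) and (iii) correctly: they are stated for $F$ with a supremum over $c'$ as well, and one must pick $c'=c$ to get a clean per-unit comparison of $\tilde{F}$ without any sign assumption on $\tilde{F}$. The rest, including the degenerate case handled above, is bookkeeping.
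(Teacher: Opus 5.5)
Your proof is correct and is essentially the paper's argument. The paper reduces to $\tilde F$ and splits the grid into $X_{max}$ and its complement. It bounds the $X_{max}$ part below by $\inf_{X_{max}}\tilde F$, handles the complement using $\tilde F=\sup_{X_{max}^c}\tilde F$ on the support of $\tilde\mu$, and closes with the equal-totals identity; that is exactly your subtraction of $\kappa=\sup_{X_{max}^c}\tilde F$ done termwise. Your version is slightly more careful: it derives the threshold facts explicitly from Definition~\ref{def:MFG_solution} (taking $c'=c$), needs only $\inf_{X_{max}}\tilde F\ge\kappa$ rather than the strict inequality the paper asserts, and covers $X_{max}^c=\emptyset$, where the paper's supremum would range over an empty set.
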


Proof of Theorem ~\ref{thm:MFE} is given in the Appendix~\ref{app_proofs}.

\begin{rmk} \label{rmk:additionalconstraints}

 Theorem ~\ref{thm:MFE} gives a necessary condition for an MFG solution $\mu$.  However, this condition is not sufficient for a measure $\mu$ to be an MFG solution unless additional conditions are satisfied.  For example, when $c^1 < c^k$, to have an MFG solution with the property $\sum_x \tilde{\mu}(x)=\tilde{C}$, in addition to inequality \eqref{mfgineq}, we need $\tilde{F}(x, \tilde{\mu})\geq 0$ for all $x$ such that $\tilde{\mu}>0$, because we need,
 \begin{equation} \label{eq:ck_condition}
    c^k\tilde{F}(X,\tilde{\mu}) \;\geq\; c^1 \tilde{F}(X,\tilde{\mu}).
\end{equation}

 Similarly, in order to have an MFG solution such that $\sum_x \tilde{\mu}(x)=\tilde{c}$, in addition to inequality \eqref{mfgineq}, we need $\tilde{F}(x, \tilde{\mu})\leq 0$ for all $x$ such that $\tilde{\mu}>0$.

 To identify sufficient conditions for the existence of an MFG solution with $\tilde{c}< \sum_x \tilde{\mu}(x)<\tilde{C}$ is more complex and will be explored in a follow up paper. 

 Finally, we note that in the special case $c^1=c^k$, inequality \eqref{mfgineq} is a necessary and sufficient condition for $\mu$ to be an MFG , because \eqref{eq:ck_condition} is always satisfied.

 \end{rmk}

\subsection{\label{sec:Potential_Games}Potential Games}
Since the action space is finite, any probability measure $m$ can be identified with its probability mass function on the action space.  In Table \ref{tab:MFE_masses}, we show how to represent $m$ as an $k\times n$-dimensional vector of non-negative masses.      

\begin{table}[t]
\centering
\begin{tabular}{|c|c|c|c|c|c|c|}
\hline
       & $X_1$ & $X_2$ & $X_3$ & $\cdots$ & $\cdots$ & $X_n$ \\
\hline
$c_1$  & $m_1$ & $m_2$ & $m_3$ & $\cdots$ & $\cdots$ & $m_n$ \\
\hline
$c_2$  & $m_{n+1}$ & $m_{n+2}$ & $m_{n+3}$ & $\cdots$ & $\cdots$ & $m_{2n}$ \\
\hline
$c_3$  & $m_{2n+1}$ & $m_{2n+2}$ & $m_{2n+3}$ & $\cdots$ & $\cdots$ & $m_{3n}$ \\
\hline
$\vdots$ & $\vdots$ & $\vdots$ & $\vdots$ & & & $\vdots$ \\
\hline
$c_k$  & $m_{(k-1)n+1}$ & $m_{(k-1)n+2}$ & $m_{(k-1)n+3}$ & $\cdots$ & $\cdots$ & $m_{kn}$ \\
\hline
\end{tabular}
\caption{Mass allocation matrix across capacities and locations}
\label{tab:MFE_masses}
\end{table}

Let $\mathcal{P}^{C_{max}}$ be the space of all probability measures $m$ on $A=\{c^1,...,c^k\}\times\{X^1,...,X^n\}$ such that $\sum_c cm(c,x)\leq C_{max}$, for all $x$.  We can identify $\mathcal{P}^{C_{max}}$ with the convex subset of $\mathbb{R}^{kn}$ consisting of $(m_1,\ldots, m_{kn})$ such that

\begin{align}
    & \sum_i c_i m_{(i-1)n+j}\leq C_{{max}} \quad \forall j=1,...,n, \\
    & \sum_{i=1}^k\sum_{j=1}^nm_{(i-1)n+j}=1,\\
    & m_l \geq 0 \,\, \forall \,l=1,\ldots, kn. 
\end{align}

A potential game is a game where there is a single scalar-valued function $G$, called the potential function, defined on $\mathcal{P}^{C_{max}}$, such that  $\nabla G (m)=   (F_1(m), \dots, F_{nk}(m))$, where $F_l(m)=F(a_l,m)$ and $a_l=(c^i,X^j)$ for $l=(i-1)n+j$.

In our case, we can show that there is a potential function $G$ and it is given by
\begin{equation}
\begin{aligned}
  G(m) &= \lambda \sum_{i=1}^k c^i \left[ \sum_{j=1}^n \mathbb{E}[W(X^j)] \, m_{(i-1)n + j} \right] \\
&-\frac{\sigma}{2}\sum_{i=1}^k (c^i)^2 \left[ \sum_{j=1}^n Cov(X^j, X^j) \, m_{[(i-1)n + j]} \, m_{[(i-1)n + j]}\right]\\
&\quad - \frac{\sigma}{2} \sum_{i=1}^k (c^i)^2 \left[ \sum_{j=1}^n \sum_{l \neq j }^n Cov(X^j, X^l) \, m_{[(i-1)n + j]} \, m_{[(i-1)n + l]} \right] \\
&\quad - \frac{\sigma}{2} \sum_{i=1}^k \sum_{p \neq i}^k c^i c^p \left[ \sum_{j=1}^n \sum_{l=1}^n Cov(X^j, X^l) \, m_{[(i-1)n + j]} \, m_{[(p-1)n + l]} \right].
\end{aligned}
\end{equation}
Indeed, 
\begin{eqnarray*}
    (\nabla G(m))_l&=& \lambda c_i \mathbb{E}[W(X^j)]-\sigma (c^i)^2 Cov(X^j,X^j)m_{l}\\
    &&-\sigma\sum_{j'\neq j} (c^{i})^2 Cov(X^j, X^{j'})m_{(i-1)n+j'}\\
    &&-\sigma \sum_{p\neq i}^k c^i c^p \sum_{j'=1}^{n} Cov(X^j,X^{j'})m_{(p-1)n+j'} \\
    &=&c_i\left[\lambda \mathbb{E}[W(X^{j})]-\sigma \sum_{j'=1}^{n} Cov(X^j,X^{j'})\sum_{i=1}^k c^i m_{(i-1)n+j'}\right]\\
    &=&F((c^i,X^j),m).
    \end{eqnarray*}

To simplify $G(m)$, we define a new variable $w_i$ by : 
\begin{equation}\label{eq:AvgCapacity}
w_i =c_1m_i+c_2m_{n+i}+...+c_km_{(k-1)n+i}
\end{equation}
 The variable $w_i$ can be interpreted as the \textit{total capacity} invested at location $X^i$. 
We observe that
\begin{equation}
    G(m)=\tilde{G}(w) = -\frac{1}{2}  \sigma \, \boldsymbol{w}^\top \mathbf{C} \, \boldsymbol{w} + \lambda \, \boldsymbol{E}[W]^\top \boldsymbol{w}
    \label{eq:potentialproblemweight}
\end{equation}
where $\mathbf{C}$ is the matrix with $\mathbf{C}_{i,j}=Cov(X^i,X^j)$ and $\boldsymbol{E}[W]$ is the column vector with $\boldsymbol{E}[W]_i= \mathbb{E}[W(X^i)]$, and $\boldsymbol{w}$ is the column vector with $\boldsymbol{w}_i=w_i$. 

\begin{theorem} \label{thm:potential} Let $\mu$ be an MFG solution such that $\sum_x\tilde{\mu}(x)=C\in [\tilde{c},\tilde{C}]$ and $\tilde{F}(x,\tilde{\mu})\geq 0$, for $x$ such that $\tilde{\mu}(x)=C_{max}$.  Let $\boldsymbol{w}^*$ be the column vector with $\boldsymbol{w}_i^*= \tilde{\mu}(X^i)$. Then $\boldsymbol{w}^*$ is the unique solution of
\begin{eqnarray}
    &&\max -\frac{1}{2}  \sigma \, \boldsymbol{w}^\top \mathbf{C} \, \boldsymbol{w} + \lambda \, \boldsymbol{E}[W]^\top \boldsymbol{w}\label{optimize}
\end{eqnarray}
\textit{Subject to:}
\[
\begin{aligned}
    &\sum_{i=1}^n w_i = C \\
    &0 \leq w_i \leq C_{max}  \quad \mbox{for} \quad i = 1, \ldots, n
\end{aligned}
\]
\end{theorem}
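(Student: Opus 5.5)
The plan is to show that $\boldsymbol{w}^*$ satisfies the KKT conditions of the quadratic program \eqref{optimize}, and then to get uniqueness from strict concavity. The feasible set $\{\boldsymbol{w}:\sum_i w_i=C,\ 0\le w_i\le C_{max}\}$ is a convex polytope. Since $\mathbf{C}$ is a covariance matrix it is positive semidefinite, so the objective $\tilde G$ is concave. Hence the KKT conditions are sufficient for optimality.

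Feasibility of $\boldsymbol{w}^*$ is immediate. Condition (i) of Definition~\ref{def:MFG_solution} gives $0\le w_i^*\le C_{max}$, and the hypothesis gives $\sum_i w_i^*=C$.

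Next compute the gradient:
\[
\partial_{w_i}\tilde G(\boldsymbol{w}^*)=\lambda\mathbb{E}[W(X^i)]-\sigma(\mathbf{C}\boldsymbol{w}^*)_i=\tilde F(X^i,\tilde\mu).
\]
Optimality over the polytope is then equivalent to the existence of a multiplier $\nu$ with three properties:
\begin{itemize}
\item $\tilde F(X^i,\tilde\mu)\le \nu$ whenever $w_i^*=0$;
\item $\tilde F(X^i,\tilde\mu)=\nu$ whenever $0<w_i^*<C_{max}$;
\item $\tilde F(X^i,\tilde\mu)\ge \nu$ whenever $w_i^*=C_{max}$.
\end{itemize}

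Set $\nu:=\sup_{y\notin X_{max}}\tilde F(y,\tilde\mu)$. The three properties follow from the MFG conditions, using $F((c,x),\mu)=c\tilde F(x,\tilde\mu)$.

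For $x\notin X_{max}$ with $\tilde\mu(x)>0$, some $c$ has $\mu(c,x)>0$. Condition (iii) then gives
\[
c\tilde F(x,\tilde\mu)=\sup_{c',y\notin X_{max}}c'\tilde F(y,\tilde\mu).
\]

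For $x\in X_{max}$, condition (ii) gives $c\tilde F(x,\tilde\mu)\ge c'\tilde F(y,\tilde\mu)$ for all $y\notin X_{max}$ and all $c'$.

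Zero-mass sites $x\notin X_{max}$ trivially satisfy $\tilde F(x,\tilde\mu)\le\nu$.

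The main obstacle is converting these capacity-weighted inequalities $c\tilde F$ into inequalities on $\tilde F$ alone, because the optimal $c$ may differ across sites. The sign of $\tilde F$ governs this. If $\nu\ge 0$, the supremum in (iii) is attained at $c'=c^k$. Then $c\tilde F(x)=c^k\nu$, and since $c\le c^k$ this forces $c=c^k$ and $\tilde F(x)=\nu$. Condition (ii) combined with the hypothesis $\tilde F(x,\tilde\mu)\ge 0$ on $X_{max}$ gives $c^k\tilde F(x)\ge c\tilde F(x)\ge c^k\nu$, hence $\tilde F(x)\ge\nu$.

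If $\nu<0$, the supremum uses $c'=c^1$ and a symmetric argument applies. On $X_{max}$, the hypothesis $\tilde F\ge0>\nu$ gives the inequality directly. A cleaner alternative is to invoke Theorem~\ref{thm:MFE}: any feasible $\boldsymbol{w}$ with the same total $C$ lifts to a measure $m$ with $\tilde m=\boldsymbol{w}$. This yields $\sum_x\tilde F(x,\tilde\mu)w_x\le\sum_x\tilde F(x,\tilde\mu)w^*_x$, which is exactly the first-order variational inequality $\nabla\tilde G(\boldsymbol{w}^*)\cdot(\boldsymbol{w}-\boldsymbol{w}^*)\le0$. I would try this route first. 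Its subtle step is the lifting: choosing $m(c,x)=w_x/(\text{fixed } c)$ must produce a probability measure, which needs the $c$-scaling compatible with the total mass. This is presumably where the hypothesis on $X_{max}$ and the sign of $\tilde F$ enter.

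Once the variational inequality holds, concavity gives
\[
\tilde G(\boldsymbol{w})\le\tilde G(\boldsymbol{w}^*)+\nabla\tilde G(\boldsymbol{w}^*)\cdot(\boldsymbol{w}-\boldsymbol{w}^*)\le\tilde G(\boldsymbol{w}^*),
\]
so $\boldsymbol{w}^*$ is optimal.

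For uniqueness, I would assume $\mathbf{C}$ is positive definite, which holds for the PCA-plus-residual-kernel covariance since the residual kernel is strictly positive definite. Then $\tilde G$ is strictly concave whenever $\sigma>0$, so its maximizer over a convex set is unique.
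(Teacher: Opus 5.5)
Your KKT argument is already a complete proof, but it is organized differently from the paper's. The paper never introduces a multiplier. It starts from the measure-level inequality of Theorem~\ref{thm:MFE}, $\sum_x\tilde F(x,\tilde\mu)\tilde m(x)\le\sum_x\tilde F(x,\tilde\mu)\tilde\mu(x)$. It then uses Lemma~\ref{thm:Farkas_Lemma} to show that every $\boldsymbol{w}\in P_2$ equals $\tilde m$ for some feasible probability vector $m$ (so $P_1=P_2$). This turns the inequality into the first-order condition $\nabla\tilde G(\boldsymbol{w}^*)\cdot(\boldsymbol{w}-\boldsymbol{w}^*)\le 0$ on $P_2$, and concavity finishes the proof. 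Your route stays in $\boldsymbol{w}$-space with the single threshold $\nu=\sup_{y\notin X_{max}}\tilde F(y,\tilde\mu)$, so it needs no lifting lemma. It also carries out explicitly the passage from the capacity-weighted conditions on $c\tilde F$ to two facts: $\tilde F=\nu$ on $\{x\notin X_{max}:\tilde\mu(x)>0\}$, and $\tilde F\ge\nu$ on $X_{max}$. The paper's proof of Theorem~\ref{thm:MFE} uses exactly these facts but only asserts them ``since $\mu$ is an MFG solution.'' In exchange, the paper's packaging isolates a reusable potential-game variational inequality over measures.

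Two of your side remarks need correcting. First, the lifting in your ``cleaner alternative'' is precisely the paper's Farkas step, and it has nothing to do with $X_{max}$ or the sign of $\tilde F$. Since $c^1=\tilde c\le C\le\tilde C=c^k$, write $C=\theta c^1+(1-\theta)c^k$ and set $m(c^1,x)=\theta w_x/C$, $m(c^k,x)=(1-\theta)w_x/C$; this is a probability vector with $\tilde m=\boldsymbol{w}$. The sign hypothesis is not needed in your KKT route either. If $\nu\ge0$, condition (ii) gives $c\tilde F(x)\ge c^k\nu\ge0$, so $\tilde F(x)\ge0$ and then $\tilde F(x)\ge\nu$. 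If $\nu<0$, it gives $\tilde F(x)\ge (c^1/c)\nu\ge\nu$.

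Second, the paper's proof stops at ``the local maximum is the global maximum'' and never establishes uniqueness, so your strict-concavity step is a needed addition. However, $\sigma>0$ and $\mathbf{C}\succ0$ should be stated as hypotheses rather than derived from the kernel. The residual kernel $e^{-\hat\lambda r}\cos(\hat\theta r)$ is positive definite on $\mathbb{R}^2$ only if $\hat\lambda\ge\hat\theta$: its planar spectral density at the origin is proportional to $(\hat\lambda^2-\hat\theta^2)/(\hat\lambda^2+\hat\theta^2)^2$. The residual variances $1-\sum_{i\le3}\psi_i^2\lambda_i$ must also be positive.
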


To prove this theorem, we first need the following lemma.
\begin{lemma}\label{thm:Farkas_Lemma}

 For any  $w=(w_1, \dots, w_n)$  such that
\begin{align}
    & \tilde{c}\leq \sum_{j=1}^n w_j \leq \tilde{C}   , \quad 0 \leq w_j \leq \tilde{C} \text{ for } j= 1,\dots,n, \nonumber
\end{align} there exists a vector $m = (m_1, \dots, m_{kn})$ such that 
\begin{align}
& \qquad \qquad \sum_{i=1}^k\sum_{j=1}^nm_{(i-1)n+j}=1,\nonumber\\
&\qquad\qquad \tilde{m}_j= \sum_{i=1}^k c^i m_{(i-1)n+j}\nonumber\\
&\qquad\qquad 0\leq m_l \,\mbox{ for } l= 1,\dots,kn. \nonumber
\end{align}
\end{lemma}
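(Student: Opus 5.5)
The plan is to build $m$ explicitly rather than through a separation or Farkas-type argument. Since $\{c^1,\dots,c^k\}$ is a uniform partition of $[\tilde c,\tilde C]$, we have $c^1=\tilde c$ and $c^k=\tilde C$. The construction puts mass only on the two extreme capacity rows, $i=1$ and $i=k$, and uses the same spatial profile in both rows, proportional to $w$.

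Let $S=\sum_{j=1}^n w_j$. By hypothesis $c^1=\tilde c\le S\le \tilde C=c^k$, and $S>0$ because $\tilde c>0$. We look for scalars $\alpha,\beta\ge 0$ and set
\[
m_j=\alpha w_j,\qquad m_{(k-1)n+j}=\beta w_j \quad (j=1,\dots,n),
\]
with all other entries equal to zero. The two required identities become linear equations in $\alpha,\beta$. First, $\sum_i c^i m_{(i-1)n+j}=(c^1\alpha+c^k\beta)w_j$, which equals $w_j$ for every $j$ exactly when $c^1\alpha+c^k\beta=1$. Second, the total mass is $(\alpha+\beta)S$, which equals $1$ exactly when $\alpha+\beta=1/S$.

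When $c^1<c^k$ this $2\times 2$ system has the solution
\[
\alpha=\frac{c^k/S-1}{c^k-c^1},\qquad \beta=\frac{1-c^1/S}{c^k-c^1}.
\]
Both are nonnegative precisely because $c^1\le S\le c^k$, and $w_j\ge 0$ then gives $m_l\ge 0$ for all $l$. Equivalently, $1/S$ is written as a convex-type combination of $1/c^k$ and $1/c^1$, and the total mass is split between the two extreme rows accordingly.

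In the degenerate case $c^1=c^k$ the hypothesis forces $S=c^1$. We then take a single row with $m_j=w_j/c^1$ and $m_l=0$ otherwise, which directly gives mass $S/c^1=1$ and $\tilde m_j=w_j$.

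The only delicate point, and the one I would check carefully, is the sign condition $\alpha,\beta\ge 0$. This is exactly where the assumption $\tilde c\le\sum_j w_j\le\tilde C$ enters, and it is the feasibility content that a Farkas alternative would otherwise certify. The bound $w_j\le\tilde C$ is not needed for this construction; the constraint $\tilde m_j\le C_{max}$ is a property of $w$ itself and is inherited automatically.
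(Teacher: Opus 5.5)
Your construction is correct, and it takes a different route from the paper. The paper argues via Farkas' lemma. It writes the conditions as $A\mathbf{m}=\mathbf{b}$, $\mathbf{m}\ge 0$ with $\mathbf{b}=(w_1,\dots,w_n,1)^\top$, and rules out a certificate $y$ with $A^\top y\ge 0$, $\mathbf{b}^\top y<0$ by splitting on the sign of $y_{n+1}$. For $y_{n+1}\ge 0$ it uses only the rows $c^k y_j+y_{n+1}\ge 0$ together with $\sum_j w_j\le c^k$. For $y_{n+1}<0$ it uses only the rows $c^1 y_j+y_{n+1}\ge 0$ together with $\sum_j w_j\ge c^1$.

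Your argument is the primal counterpart of this. You exhibit the witness directly, supported on the same two extreme rows $c^1=\tilde c$ and $c^k=\tilde C$. Your sign conditions $\alpha\ge 0$ and $\beta\ge 0$ are exactly the two inequalities the paper uses in its two cases.

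\textbf{What your route buys.}
\begin{enumerate}
\item An explicit $m$, which is useful for recovering an actual MFG measure from a QP solution $w^*$. For instance, when $\sum_j w_j=\tilde C$ it gives $\alpha=0$ and $\beta=1/\tilde C$, so all agents invest at maximal capacity, matching the paper's remark about that case.
\item A transparent view of where the hypothesis $\tilde c\le\sum_j w_j\le\tilde C$ enters.
\item The observation that the bound $w_j\le\tilde C$ is redundant.
\end{enumerate}

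\textbf{What it costs.} You need a separate treatment of the degenerate case $c^1=c^k$, and you need $\sum_j w_j>0$ (from $\tilde c>0$); you handle both correctly. The Farkas argument covers $c^1=c^k$ uniformly, since both case bounds collapse to $0$. It would also adapt more easily if further linear constraints on $m$ were imposed, but it is non-constructive.
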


The proof of Lemma \ref{thm:Farkas_Lemma} is based on Farkas Lemma and deferred to Appendix A. 
\begin{proof} \textbf{(of Theorem \ref{thm:potential})}

We follow a similar argument as in \cite{Lacker2018Notes}: Theorem 2 implies that $\frac{d}{d\epsilon}\langle \nabla \tilde{G}(\tilde{\mu}), \tilde{\mu}+\epsilon(\tilde{m}-\tilde{\mu})\rangle = \sum_x\tilde{F}(x,\tilde{\mu})\tilde{m}(x)-\sum_x\tilde{F}(x,\tilde{\mu})\tilde{\mu}(x)\leq 0$ for all $m$ such that $0\leq \tilde{m}(x)\leq C_{max}$ for all $x$, and $\sum_{x}\tilde{m}(x)= C$.  

Let ${P_1}$ be the set of $\tilde{m}=(\tilde{m}_1,\ldots \tilde{m}_n)$ such that 
\begin{align}
     &\qquad\qquad \tilde{m}_j= \sum_{i=1}^k c^i m_{(i-1)n+j}\\
     &\qquad\qquad \sum_{x}\tilde{m}(x)= C\\ 
     & \qquad \qquad \sum_{i=1}^k\sum_{j=1}^nm_{(i-1)n+j}=1, \quad m_i \geq 0 \nonumber\\
     &\qquad\qquad 0\leq \tilde{m}(x)\leq C_{max}
\end{align}

Let $P_2$ be the set of $w=(w_1,\ldots w_n)$ such that
\begin{align}
    & \qquad \sum_{j=1}^n w_j =C   , \quad 0 \leq w_j \leq C_{max} \text{ for } j= 1,\dots,n \nonumber\\
\end{align}

Due to Lemma \ref{thm:Farkas_Lemma}, we have that $P_1=P_2$.  Hence we conclude that, if $\mu$ is an MFG solution then $\tilde{\mu}\in P_2$ and $\frac{d}{d\epsilon}\langle \nabla \tilde{G}(\tilde{\mu}), \tilde{\mu}+\epsilon(w-\tilde{\mu})\rangle$ for all $w$ in $P_2$

Hence $\boldsymbol{w}^*$ is a local maximum of the objective function \eqref{optimize} on $P_2$. Since the objective function \eqref{optimize} is concave and $P_2$ is convex, the local maximum is the global maximum.
\end{proof}




Our strategy to find an MFG solution is to solve the optimization problem \eqref{optimize} and further validate that solution satisfies the additional conditions required for the MFG solution (see Remark \ref{rmk:additionalconstraints} ).  Since the additional conditions are straightforward for the case $C=\tilde{C}$, in this paper, we focus on MFG solutions $\mu$ with $\sum_x\tilde{\mu}(x)=\tilde{C}$.  Note that in this solution, all the investors invest at the maximum allowed capacity.  Another significance of this case is that we can clearly establish that for the case $c^1=c^K$, the solution of \eqref{optimize} on $P_2$ is the unique MFG  solution.  

To solve \eqref{optimize} on $P_2$, we follow the \textit{Semidefinite Programming} techniques introduced by \citet{semidef_prog} which specifically falls under the section \textit{Quadratic Programming} in this literature and we use the R library \textit{quadprog} modeled by \citet{quadprog} to solve our maximization problem.

A typical quadratic programming problem takes the following form:

\begin{equation}
    \min\limits_{x} \left( \frac{1}{2} X^T D X - d^T X \right)
\end{equation}
\textit{subject to constraints written in matrix form : $A^TX \geq b$}
\newline
If we convert this minimization problem to a maximization we immediately have the form of \eqref{optimize} on $P_2$ in the matrix form as shown above.


\section{\label{sec:RevenueModelWindFarms}Revenue model for Windfarms in Alberta}
In this chapter, we explore the relationship between wind resources and revenue generation empirically using power from \citet{AESO_MeteredVolumes} and price data obtained from \citet{AESO_PoolPrice}. This analysis uses market rules and historical data spanning the years 2016 to 2024. 

We expect that a high capacity factor boosts revenue while spatial crowding of wind farms reduces revenue due to the merit order effect (see section \ref{sec:crowding_validation}).
To investigate this, we begin by examining how wind farms in Alberta participate in the electricity market and earn revenue under the current market structure. In Alberta, electricity generators are paid the hourly spot price multiplied by the power they have generated. A tax of 37 cents per megawatt hour of electricity generated is paid by generators to AESO to operate the market \citep{AESO_TradingCharge2024}.
Given this context, we aim to estimate the empirical revenue earned by each existing wind farm in Alberta. This involves calculating revenue based on hourly generation data and spot prices. These revenue estimates will then be used in a linear regression framework to identify and quantify the relationship between revenue and wind resource quality across different sites. 

To calculate the annual revenue of wind farms in Alberta, consider the following parameters:
\begin{itemize}
\item $t = 1,2,...T$ represents the hours in a year. For leap years, $T = 8784$. For a non-leap year, $T = 8760$.
\item $\tau = 2016,...., 2024$ represents the year.
\item $i$ corresponds to agents producing power from wind energy.
\item $s_{t}^{\tau}$ is the pool price at hour $t$ year $\tau$.
\item $P_{t} ^ {\tau,i}$ is the power produced by agent $i$ at any hour $t$ of a given year $\tau$.
\item $c_i$ is the total capacity installed by agent $i$.
\end{itemize}

The empirical hourly revenue of wind farn $i$, $r^i _t$  is calculated using the data by:
\begin{equation}
    r^{\tau,i} _t = P_{t} ^ {\tau,i}*s_{t}^{\tau} - 0.37*P_{t} ^ {\tau,i}
\end{equation}

\noindent The yearly normalized empirical revenue per capacity for wind farm $i$, $R^{i,\tau}$ is then:
\begin{equation}\label{eq:normalized_revenue}
    R^{i,\tau} = \frac{\sum_{t=1}^{T} r^{i, \tau}_{t}}{\bar{S^\tau} c_i}
\end{equation}
where $\bar{S^\tau}=\frac{\sum_{t=1}^{T}s_{t}^{\tau}}{T}$ is the average pool price for year $\tau$.

Table \ref{tab:normrevcap} shows the average yearly normalized revenue per capacity of wind farms in Alberta from 2016-2024. 
We represent power output as a function of capacity factor, $W(x)$ , where $W(x)$ is defined as equation~\eqref{eq:CapacityFactorEq}.

\begin{table}[htbp]
\centering
\caption{Normalized Revenue Per Capacity by Farm (NRPC): (2016--2024)}
\label{tab:normrevcap}
\resizebox{\textwidth}{!}{%
\begin{tabular}{clrrrrrrrrr}
\toprule
 FarmName & NRPC\_2016 & NRPC\_2017 & NRPC\_2018 & NRPC\_2019 & NRPC\_2020 & NRPC\_2021 & NRPC\_2022 & NRPC\_2023 & NRPC\_2024 \\
\midrule
 AKE1  & 2430.027 & 2389.474 & 2064.204 & 1730.00 & 2206.551 & 1635.539 & 1496.08 & 1375.657 & 1370.13 \\
 ARD1  & 2730.693 & 2677.119 & 2265.105 & 1968.38 & 2478.617 & 1858.822 & 1769.70 & 1607.062 & 1672.46 \\
 BSR1  & 2981.979 & 2888.961 & 2180.911 & 2055.13 & 2615.075 & 2490.024 & 2318.35 & 1773.314 & 1661.74 \\
 BTR1  & 2483.548 & 2415.789 & 2122.224 & 1825.88 & 2313.728 & 1816.250 & 1647.72 & 1487.677 & 1556.20 \\
 BUL12 & 3265.324 & 3573.209 & 3128.339 & 3231.51 & 3424.404 & 3078.781 & 3078.07 & 2620.131 & 2116.63 \\
 CR1   & 2321.991 & 2294.687 & 2019.323 & 1681.66 & 2076.240 & 1662.321 & 1465.76 & 1445.197 & 1423.10 \\
 CRR1  & 2632.423 & 2548.415 & 2269.823 & 1813.37 & 2316.492 & 1944.560 & 1659.52 & 1502.609 & 1648.94 \\
 GWW1  & 2741.881 & 2669.258 & 2272.821 & 1830.64 & 2464.500 & 1946.054 & 1925.79 & 1722.013 & 1843.18 \\
 HAL1  & 2748.259 & 3037.183 & 2428.509 & 2385.69 & 2769.479 & 2712.487 & 2544.41 & 1942.182 & 1585.81 \\
 IEW1  & 2526.847 & 2564.230 & 2148.510 & 1701.64 & 2137.914 & 1726.325 & 1513.08 & 1471.161 & 1416.76 \\
 IEW2  & 2224.736 & 2205.126 & 1882.285 & 1591.29 & 2010.876 & 1509.429 & 1394.20 & 1356.647 & 1227.53 \\
 KHW1  & 2659.561 & 2648.342 & 2342.797 & 1955.09 & 2414.713 & 1881.680 & 1748.59 & 1632.810 & 1699.42 \\
 NEP1  & 2043.848 & 2269.060 & 1670.642 & 1592.01 & 1930.510 & 1898.678 & 1982.54 & 1301.872 & 1198.94 \\
 OWF1  & 2960.188 & 2796.227 & 2433.605 & 2023.14 & 2510.779 & 1951.694 & 1813.99 & 1766.302 & 1680.48 \\
 SCR2  & 2629.910 & 2497.751 & 2041.735 & 1873.38 & 2471.758 & 1829.938 & 1864.55 & 1587.423 & 1488.59 \\
 SCR3  & 2759.867 & 2616.168 & 1950.466 & 1849.65 & 2213.759 & 1980.939 & 1860.95 & 1499.202 & 1349.54 \\
 SCR4  & 2895.011 & 3065.141 & 2329.019 & 2144.77 & 2555.770 & 2622.038 & 2570.57 & 1578.494 & 1489.15 \\
 TAB1  & 2413.541 & 2356.347 & 1869.184 & 1732.65 & 2099.143 & 1880.846 & 1607.34 & 1216.333 & 1002.97 \\
\midrule
\textbf{Avg Pool Price (\$/MWh)} & \textbf{18.28} & \textbf{22.19} & \textbf{50.35} & \textbf{54.88} & \textbf{46.72} & \textbf{101.93} & \textbf{162.46} & \textbf{133.63} & \textbf{62.78} \\
\bottomrule
\end{tabular}%
}
\end{table}

Revenue is dependent on power produced. Therefore, to estimate potential revenue from wind resources at any location within Alberta, we must develop a model that approximates wind power from wind speed (see section \ref{sec:CapFactorEst}). 

\section{Empirical Validation of Covariance as a Penalty}
\label{sec:crowding_validation}

A key prediction of our mean-field reward function is that the revenue of a wind farm is penalized by its covariance with the rest of the fleet. Wind farms whose revenues are highly correlated with total provincial generation should, on average, receive systematically lower normalised revenues as a result of the merit order effect when many farms produce simultaneously at high rates, driving down the price. 

Before applying the mean-field game approach for optimizing wind farm locations, we also check whether the "crowding penalty" is indeed empirically observed in the historical Alberta data. To do so, we directly estimate $\lambda$ and $\sigma$ from the observed revenues, realized capacity factors, and empirical
covariances.

\subsection{Construction of Regression Variables}\label{sec:Regression_construction}

All three variables are constructed from empirical data for each farm-year observation.

\paragraph{Normalised revenue} : $\tilde{R}_{i,y}$ is the dependent variable where it is calculated by dividing the annual revenue of farm $i$ in year $y$ by both its installed capacity $c_i$ and the annual average pool price where the annual revenue values from section \ref{sec:RevenueModelWindFarms} are used here.

Here $\bar{P}_y$ denotes the average pool price for year $y$.

\begin{equation}
    \tilde{R}_{i,y}
    \;=\;
    \frac{\mathrm{Rev}_{i,y}}{c_i \,\bar{P}_y}.
    \label{eq:norm_rev}
\end{equation}

\noindent
Dividing by the average pool price removes annual price fluctuations,
so that $\tilde{R}_{i,y}$ reflects the performance of the
farm rather than market conditions. 

\paragraph{Mean capacity factor.} $\mathbb{E}[W(x)_{i,y}]$ is the empirical annual mean capacity factor of farm $i$, computed directly from the hourly AESO dispatch data.
\begin{equation}
    \mathbb{E}[W(x)_{i,y}]
    \;=\;
    \frac{1}{T_{i,y}}
    \sum_{t=1}^{T_{i,y}}
    W(x)_{i,t},
    \label{eq:empirical_cf}
\end{equation}
\noindent
where $W(x)_{i,t} = P_{i,t}/C_i$ is the hourly capacity factor,
$P_{i,t}$ is the observed power output, $C_i$ is the installed capacity,
and $T_{i,y}$ is the number of operating hours of farm $i$ in year $y$.

\paragraph{Covariance per capacity.}
Let $\mathbf{W}_y$ denote the $T_y \times N_y$ matrix of hourly capacity factors for all $N_y$ operating farms in year $y$, where each column corresponds to a farm and each row to an hourly observation. Since not all farms operate for the full $T_y$ hours in a given year, each entry $(i,j)$ of the empirical covariance matrix is computed using only the $n_{ij}$ hours in which both farms $i$ and $j$ report non-missing output:
\begin{equation}
    \widehat{\Sigma}_{y,ij}
    \;=\;
    \frac{1}{n_{ij} - 1}
    \sum_{t \in \mathcal{T}_{ij}}
    \Bigl(W(x)_{i,t} - \bar{W}(x)_{i}^{(ij)}\Bigr)
    \Bigl(W(x)_{j,t} - \bar{W}(x)_{j}^{(ij)}\Bigr),
    \label{eq:emp_cov}
\end{equation}
\noindent
where $\mathcal{T}_{ij} = \{t : W(x)_{i,t} \text{ and } W(x)_{j,t} \text{ are both observed}\}$, $n_{ij} = |\mathcal{T}_{ij}|$, and $\bar{W}(x)_{i}^{(ij)}$ denotes the mean capacity factor of farm $i$ computed over $\mathcal{T}_{ij}$ only.

Our final regression model is now,

\begin{equation}
\tilde{R}_{i,y} = \hat{\lambda}\,\mathbb{E}[W(x)_{i,y}] -
\hat{\sigma}\,\sum_{j=1}^{N_y} c_j\,\widehat{\Sigma}_{y,ij}
\label{eq:regression}
\end{equation}

\subsection{Results and Interpretation} \label{Parameter_estimation}

Using the empirical values calculated in Section~\ref{sec:Regression_construction} from AESO power data and applying it to equation \ref{eq:regression} we numerically estimate our parameters using linear regression. Table~\ref{tab:regression_results} reports the estimates across all windows.
\begin{table}[htbp]
\centering
\caption{Rolling five-year OLS estimates of $\hat{\lambda}$ and $\hat{\sigma}$
         from~\eqref{eq:regression}, using empirical capacity factors
         and covariances from raw hourly AESO dispatch data.
         Standard errors in parentheses. All estimates significant
         at the 1\% level.}
\label{tab:regression_results}
\small
\begin{tabular}{lrrrrrr}
\hline
\textbf{Window} & $\hat{\lambda}$ & $(\mathrm{SE})$ &
$\hat{\sigma}$ & $(\mathrm{SE})$ & $R^2$ & $n$ \\
\hline
2016--2020 & 8{,}285 & (287.0) & $5.186$ & (0.945) & 0.707 &  96 \\
2017--2021 & 8{,}218 & (265.7) & $5.878$ & (0.797) & 0.743 & 100 \\
2018--2022 & 7{,}986 & (203.2) & $5.940$ & (0.553) & 0.769 & 108 \\
2019--2023 & 7{,}540 & (172.5) & $4.879$ & (0.384) & 0.819 & 127 \\
2020--2024 & 7{,}326 & (162.6) & $4.375$ & (0.317) & 0.818 & 152 \\
\hline
All years  & 8{,}281 & (133.2) & $5.901$ & (0.292) & 0.836 & 227 \\
\hline
\end{tabular}
\end{table}

The key empirical finding is that $\hat{\sigma}$ is positive and highly significant across every estimation window, ranging from $4.37$ to $5.94$. This confirms the central prediction of our mean-field game framework: farms whose hourly output is highly correlated with the rest of the Alberta fleet earn systematically lower revenue per unit capacity, even after controlling for their own mean generation through $\bar{\mathrm{CF}}_{i,y}$. This mechanism is the merit-order effect, where simultaneous high output from spatially clustered farms suppresses the pool price.

The parameter $\hat{\lambda}$ ranges from $7326$ to $8285$ across windows, reflecting variation in the effective price of generation over the sample period after pool-price normalisation. Its stability across windows indicates that the relationship between mean capacity factor and normalised revenue is consistent over time. The $R^2$ values range from $0.71$ to $0.84$, with the full-sample fit indicating that mean capacity factor and spatial covariance exposure together explain $\approx$ 84\% of the cross-sectional and inter-temporal variation in normalized farm revenue.

These empirically estimated parameters $\hat{\lambda} = 8281$ and $\hat{\sigma} = 5.901$ (full sample) confirm that the reward function in the mean-field game is correctly specified and that spatial co-production is a genuine economic cost. We adopt the full-sample estimates as inputs to the static mean-field game solved in Section~\ref{sec:mfg_solution}.

\section{\label{sec:CapFactorEst}Expected Capacity Factor Estimation}
For the estimation of the expected capacity factor, climate data were obtained from the Alberta Climate Information 
Service (ACIS) \citep{ACIS2024}.  To estimate the power output from wind speed data, two main challenges must be addressed:

\begin{enumerate}
\item Wind speeds are recorded at a reference height that typically differs from the actual hub height of wind turbines.
\item Wind speed measurements are taken from stations that are not located precisely at wind farm sites.
\end{enumerate}

In the following sections, we describe the methods used to address each of these challenges.
\subsection{Log-wind Profile}\label{Log_wind_profile}

Wind speed measurements by ACIS are obtained at a height of 10 meters \citep{ACIS2024}. Since wind turbines are at varying heights, much greater than 10 meters, the wind speed must be scaled to the location of the hub heights.

The equation to estimate mean wind speed ($u_{z}$) at a height z (meters) above the ground is given by the log-wind profile \citet{Oke_1987}:
\begin{equation}
    u_z = \frac{u_*}{k} \left[ln \left(\frac{z-d}{z_0} \right) + \phi(z,z_0,L ) \right]
\end{equation}
where $u_{*}$ is the friction velocity ($ms^{-1}$), $\kappa$ is the Von Karman constant(0.41), $d$ is the zero plane displacement (meters), $z_0$ is the surface roughness (meters), $\Phi$ is the stability term where $L$ is the Obukhov length. 
Under neutral stability conditions, $\frac{z}{L}$ = 0 and $\Phi$ drops out.  The equation simplifies to :
\begin{equation}
    u_z = \frac{u_*}{k} \left[ln(\frac{z-d}{z_0})\right]
\end{equation}

The equation can be rearranged to estimate the mean wind speed at one height $(z_2)$ based on another $(z_1)$ :
\begin{equation}
    u(z_{2}) =  u(z_{1}) \frac{ln((z_2 - d)/z_0)}{ln((z_1 - d)/z_0)}
\end{equation}
where u($z_1$) is the mean wind speed at height $z_1$.  Zero-plane displacement (d) is the height in meters above the ground at which zero mean wind speed is achieved as a result of flow obstacles such as trees or buildings.

Since wind farms are located in agricultural/rural areas, we will assume that there are no obstacles (d=0). The equation reduces to:
\begin{equation}\label{eq:WSA_height_formula}
    u(z_{2}) =  u(z_{1}) \frac{ln(z_2/z_0)}{ln(z_1/z_0)}.
\end{equation}

The values of $z_0$ were estimated at each wind farm location using ERA5 reanalysis data \citep{ERA5}, which provides wind speeds at both 10\,m and 100\,m heights. Inverting equation~\eqref{eq:WSA_height_formula} with $z_1 = 10$\,m and $z_2 = 100$\,m, we solve for $z_0$ at each location as:
\begin{equation}\label{eq:z0_estimation}
    z_0 = \exp\left(\frac{u(z_1)\ln(z_2) - 
    u(z_2)\ln(z_1)}{u(z_1) - u(z_2)}\right).
\end{equation}

\subsection{Justification of the Sigmoid Model from Wind Speed to Wind Power}
\label{sec:power_curve}

Now, we address our second challenge where we estimate the capacity factor at an unknown location (grid point) in Alberta. To do this, we use the power volumes provided by \citet{AESO_MeteredVolumes} for each wind producer in Alberta. For each wind farm, we consider weather stations within a radius of 50km. Table \ref{tab:training_results} provides the distance to the closest weather station and the number of stations within a radius of 50 km for each wind farm considered during the period. These metrics play an important role since they affect the prediction quality of our wind speed to wind power mapping, as illustrated in section~\ref{sec:sigmoid_estimation_results}. 

Using equation~\eqref{eq:WSA_height_formula} with $z_1 = 10m$ and $z_2$ being the hub height of the turbine at each wind farm, and the $10m$ wind speeds obtained from ACIS weather stations \citep{ACIS2024} with the site-specific $z_0$ estimates obtained from equation~\eqref{eq:z0_estimation}, we obtain hourly wind speeds at each weather station adjusted to the hub height of each wind farm. Next, applying an inverse distance weight to these hourly wind speeds, we find approximate wind speed averages for our wind farm. Figure~\ref{fig:Windspeed_vs_Power Plots} illustrates the power curves for four representative farms spanning a range of capacities and station coverage levels.

\subsubsection{Data Exclusions}
\label{sec:data_exclusions}
During the years 2016-2019, Alberta had 19 operational wind farms. However, we excluded two farms from the analysis on data quality grounds. Farm CRE3 underwent a capacity derating from 38\,MW to 20\,MW in March 2016, experienced frequent extended maintenance outages (multiple shutdowns per year during 2016-2018), and had no power data for August-December 2019. These irregularities compromise the reliability of the empirical power curve at this site. Farm HAL1, located in the northeast of the province where the nearest weather station is more than 46 km away with only one station within 50 km, was excluded due to insufficient wind speed coverage for reliable estimation. The remaining 17 farms constitute as our validation sample.

\begin{figure*}[t] 
\centering
    \begin{subfigure}[b]{0.48\textwidth}
        \centering
        \includegraphics[width=\textwidth]{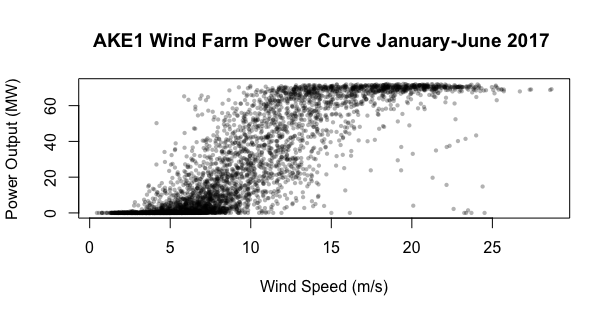}
        \caption{AKE1 (73\,MW) - high $R^2 = 0.789$, dense nearby stations.}
        \label{fig:pc_ake1}
    \end{subfigure}
    \hfill
    \begin{subfigure}[b]{0.48\textwidth}
        \centering
        \includegraphics[width=\textwidth]{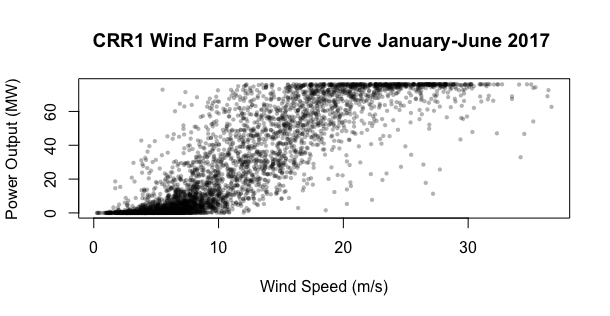}
        \caption{CRR1 (77\,MW) -=- high $R^2 = 0.776$, dense nearby stations.}
        \label{fig:pc_crr1}
    \end{subfigure}

    \vspace{0.5em}

    \begin{subfigure}[b]{0.48\textwidth}
        \centering
        \includegraphics[width=\textwidth]{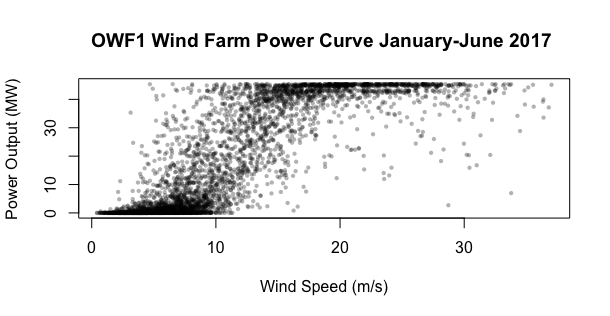}
        \caption{OWF1 (46\,MW) - moderate $R^2 = 0.740$, intermediate station coverage.}
        \label{fig:pc_owf1}
    \end{subfigure}
    \hfill
    \begin{subfigure}[b]{0.48\textwidth}
        \centering
        \includegraphics[width=\textwidth]{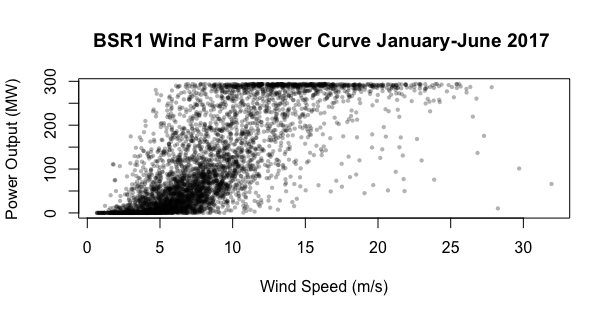}
        \caption{BSR1 (300\,MW) - lower $R^2 = 0.604$, far away stations.}
        \label{fig:pc_bsr1}
    \end{subfigure}

\caption{Wind speed versus power output (January-June 2017) for four
         representative Alberta wind farms spanning a range of capacities and
         goodness-of-fit values. The S-shaped ramp-up from cut-in to rated power
         is visible in all farms, motivating the logistic sigmoid model in
         \eqref{eq:sigmoid}. Variation in scatter across panels reflects
         differences in wind speed estimation quality, which depends on the
         proximity and density of surrounding weather stations rather than a
         deficiency of the sigmoid parameterisation itself.}
\label{fig:Windspeed_vs_Power Plots}
\end{figure*}

Wind turbine power output as a function of wind speed is typically known to follow a characteristic S-shaped curve. We have zero generation below the cut-in wind speed, a steep rise through the operational range, and saturation at the rated or maximum capacity above the rated wind speed. This S-shaped relationship between wind speed and power output is clearly visible across all panels of Figure~\ref{fig:Windspeed_vs_Power Plots}. The contrast between AKE1 and CRR1 (tight scatter, $R^2 \geq 0.77$) versus BSR1 (wider scatter, $R^2 = 0.60$) reflects differences in wind speed interpolation quality rather than model misspecification.

To capture this behaviour, we model the capacity factor $\mathrm{CF}(v)$ of each wind farm using a constrained logistic sigmoid function of the form
\begin{equation}
    \mathrm{CF}(v) \;=\;
    \begin{cases}
        0, & v < v_{\min}, \\[4pt]
        \displaystyle\frac{1}{1 + \exp\!\bigl(-\beta\,(v - \alpha)\bigr)},
        & v_{\min} \leq v \leq v_{\max}, \\[6pt]
        1, & v > v_{\max},
    \end{cases}
    \label{eq:sigmoid}
\end{equation}
where $v$ denotes the hub-height wind speed (m\,s$^{-1}$), $\alpha$ is the inflection-point wind speed at which the farm reaches 50\% of its observed maximum output, and $\beta > 0$ governs the steepness of the ramp-up. The cut-in threshold $v_{\min}$ and rated-power threshold $v_{\max}$ are determined empirically from the power data for each farm--period combination (see Section~\ref{sec:threshold_estimation}), and the asymptotes are fixed at 0 and 1 so that the two free parameters $(\alpha, \beta)$ describe only the shape of the ramp-up region.

The normalised capacity factor is defined as
\begin{equation}
    \mathrm{CF}_{i,t} \;=\; \frac{P_{i,t}}{P_{i}^{\max}},
    \label{eq:cf_def}
\end{equation}
where $P_{i,t}$ is the hourly generation of farm $i$ at time $t$ and $P_{i}^{\max}$ is the empirical maximum observed output for that farm-period. Normalizing by $P_{i}^{\max}$ rather than nameplate capacity decouples the curve-shape parameters from the rated capacity of the farm, making $\alpha$ and $\beta$ directly comparable across farms of different sizes.

\subsubsection{Threshold Estimation}
\label{sec:threshold_estimation}

The cut-in wind speed $v_{\min}$ and rated wind speed $v_{\max}$ are estimated directly from the observed generation data for each farm--period, without requiring manufacturer specifications. Let $n$ denote the number of observed hourly records in the fitting window and define
\begin{align}
    n_{\mathrm{low}} &= \#\{t : P_{i,t} < 1\,\text{MW}\}, \notag \\
    n_{\mathrm{high}} &= \#\{t : P_{i,t} > P_{i}^{\max} - 2\,\text{MW}\}. \notag
\end{align}
The corresponding quantile probabilities are $p_0 = \frac{n_{\mathrm{low}}}{n}$ and $p_1 = 1 - \frac{n_{\mathrm{high}}}{n}$, and the thresholds are set to
\begin{equation}
    v_{\min} = Q_{p_0}(v), \qquad v_{\max} = Q_{p_1}(v),
    \label{eq:thresholds}
\end{equation}
where $Q_p(v)$ denotes the $p$-th empirical quantile of the wind speed observations. This approach ensures that the sigmoid is fitted only on hours where the farm is in its operational ramp-up regime, excluding both low-wind shutdown periods and high-wind saturation periods.

\subsubsection{Parameter Estimation}
\label{sec:param_estimation}

For each farm $i$, year $y$, and six-month period $d \in \{\text{January-June},\,\text{July-December}\}$, the parameters $(\alpha_{i,y,d},\, \beta_{i,y,d})$ are estimated by non-linear least squares on the capacity-factor observations in the operational window $[v_{\min},\, v_{\max}]$:
\begin{equation}
(\hat\alpha,\,\hat\beta) \;=\;
\underset{\alpha,\,\beta}{\arg\min}
    \sum_{t:\,v_{\min} \leq v_t \leq v_{\max}}
    \!\!\!\!\bigl(\mathrm{CF}_{i,t} - \sigma(v_t;\,\alpha,\beta)\bigr)^2,
    \label{eq:nls}
\end{equation}
where $\sigma(v;\alpha,\beta) = [1 + \exp(-\beta(v-\alpha))]^{-1}$.

We fit each wind farm separately for each six-month period to capture seasonal variation in the power curve arising from differences in air density and the prevailing wind direction. Extended periods of zero generation exceeding 60 consecutive hours are removed prior to fitting to exclude planned maintenance outages, which are unrelated to the wind speed--power relationship.

\subsubsection{Training Results: In-Sample Fit (2016-2019)}
\label{sec:sigmoid_estimation_results}

Table~\ref{tab:training_results} reports the averaged sigmoid parameters and in-sample goodness-of-fit statistics for the 17 retained wind farms over the training period 2016-2019. Each farm was fitted over eight semi-annual windows (four years $\times$ two periods), and the reported values are the means across these eight fits. The table also includes the distance to the nearest weather station and the number of stations within 50\,km to illustrate the relationship between station coverage and model performance.

\begin{table}[htbp]
\centering
\caption{Averaged sigmoid parameters and in-sample goodness-of-fit over the training period (2016--2019, eight semi-annual windows per farm). $\bar{v}_{\min}$, $\bar{v}_{\max}$: mean cut-in and rated wind speed thresholds (m\,s$^{-1}$); $\bar{\alpha}$: mean inflection-point wind speed (m\,s$^{-1}$); $\bar{\beta}$: mean ramp-up steepness; $\overline{\mathrm{ACF}}$, $\overline{\mathrm{ECF}}$: mean actual and estimated capacity factors; $\overline{R^2}$: mean coefficient of determination; $d_{\mathrm{stn}}$: distance to nearest weather station (km); $n_{\mathrm{stn}}$: number of stations within 50\,km. Farms are sorted by $\overline{R^2}$ (descending).}
\label{tab:training_results}
\small
\begin{tabular}{lrrrrrrrrrrr}
\hline
\textbf{Farm} & \textbf{Capacity} & $\bar{v}_{\min}$ & $\bar{v}_{\max}$ & $\bar{\alpha}$ & $\bar{\beta}$ & $\overline{\mathrm{ACF}}$ & $\overline{\mathrm{ECF}}$ & $\overline{R^2}$ & $\bar{r}$ & $d_{\mathrm{stn}}$ & $n_{\mathrm{stn}}$ \\
\hline
CRR1  &  77 &  5.57 & 22.4 & 13.37 & 0.306 & 0.334 & 0.337 & 0.787 & 0.889 &  4.79 & 6  \\
CR1   &  39 &  6.12 & 20.8 & 12.33 & 0.337 & 0.304 & 0.305 & 0.780 & 0.887 &  3.28 & 5  \\
ARD1  &  68 &  5.76 & 22.1 & 12.53 & 0.354 & 0.351 & 0.357 & 0.769 & 0.879 &  2.77 & 7  \\
AKE1  &  73 &  5.91 & 20.2 & 10.96 & 0.410 & 0.329 & 0.333 & 0.757 & 0.872 & 10.74 & 7  \\
KHW1  &  63 &  5.89 & 20.1 & 10.97 & 0.362 & 0.355 & 0.358 & 0.745 & 0.867 & 11.09 & 6  \\
OWF1  &  46 &  6.11 & 21.5 & 11.53 & 0.372 & 0.362 & 0.365 & 0.733 & 0.862 &  5.48 & 6  \\
IEW2  &  66 &  6.20 & 25.3 & 14.14 & 0.279 & 0.292 & 0.293 & 0.718 & 0.851 &  4.58 & 6  \\
GWW1  &  71 &  5.37 & 22.7 & 11.65 & 0.369 & 0.347 & 0.359 & 0.717 & 0.852 &  7.51 & 7  \\
BTR1  &  66 &  5.73 & 22.1 & 11.89 & 0.332 & 0.327 & 0.331 & 0.713 & 0.846 & 14.94 & 7  \\
IEW1  &  66 &  6.17 & 26.1 & 12.27 & 0.283 & 0.331 & 0.329 & 0.670 & 0.824 &  4.58 & 6  \\
SCR2  &  30 &  5.11 & 16.4 & 10.90 & 0.311 & 0.330 & 0.332 & 0.632 & 0.803 & 21.52 & 7  \\
BSR1  & 300 &  3.53 & 18.7 &  9.36 & 0.403 & 0.370 & 0.370 & 0.617 & 0.787 & 30.31 & 4  \\
SCR4  &  88 &  4.06 & 19.9 & 10.98 & 0.325 & 0.355 & 0.352 & 0.584 & 0.766 &  7.25 & 5  \\
TAB1  &  81 &  4.35 & 17.4 & 11.25 & 0.304 & 0.309 & 0.311 & 0.576 & 0.761 & 10.05 & 8  \\
SCR3  &  30 &  5.13 & 15.6 & 10.49 & 0.312 & 0.338 & 0.339 & 0.570 & 0.765 & 12.73 & 10 \\
NEP1  &  82 &  3.04 & 19.3 & 11.28 & 0.257 & 0.265 & 0.247 & 0.450 & 0.680 & 18.40 & 1  \\
BUL12 &  29 &  4.70 & 15.4 &  9.73 & 0.273 & 0.425 & 0.422 & 0.437 & 0.679 & 44.40 & 1  \\
\hline
\textbf{Mean} &  & \textbf{5.45} & \textbf{20.3} & \textbf{11.51} & \textbf{0.329} & \textbf{0.339} & \textbf{0.338} & \textbf{0.662} & \textbf{0.816} & & \\
\hline
\end{tabular}
\end{table}

Across the 17 farms, the mean in-sample $R^2$ is 0.66 and the mean correlation between estimated and realised hourly capacity factors is 0.82, indicating that the sigmoid model captures the dominant wind speed--power relationship well. The fitted parameters $\alpha$ and $\beta$ are highly stable across semi-annual windows within each farm: the coefficient of variation of $\alpha$ ranges from 2.2\% to 5.9\%, and that of $\beta$ from 2.9\% to 15.0\%, confirming that the power curve is a stable physical characteristic of each site.

The variation in $R^2$ across farms is largely explained by the quality of the wind speed estimate, which depends on the proximity and density of surrounding weather stations.  As shown in Figures~\ref{fig:map_province} and~\ref{fig:map_zoom}, the ACIS weather station network provides dense coverage in southern Alberta but is sparse in the northeast. Farms such as CRR1, CR1, ARD1, and AKE1, which lie within a cluster of stations in the Pincher Creek area, achieve $\overline{R^2} \geq 0.76$. By contrast, BUL12 and NEP1, which are served by only a single station within 50km at distances of 44 and 18km respectively, yield $\overline{R^2}$ values of 0.44 and 0.45, reflecting degraded wind speed interpolation quality rather than a failure of the sigmoid model. Farms in central Alberta (SCR2, SCR3, SCR4, TAB1) show intermediate $\overline{R^2}$ values (0.57-0.63).
\begin{figure}[htbp]
    \centering
    \includegraphics[width=0.65\textwidth]{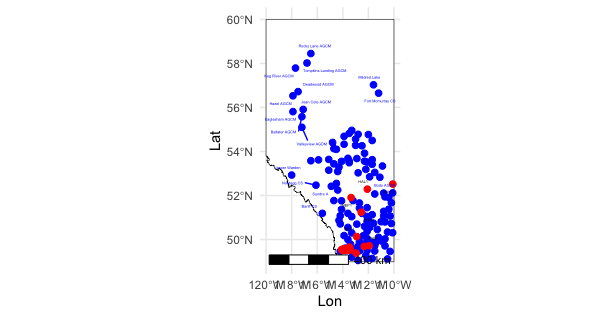}
    \caption{Location of wind farms (red circles) and Alberta Government
             weather stations (blue circles) across the province.
             Station density is substantially higher in southern Alberta,
             leading to more accurate hub-height wind speed interpolation
             for farms in that region.}
    \label{fig:map_province}
\end{figure}

\begin{figure}[htbp]
    \centering
    \includegraphics[width=0.90\textwidth]{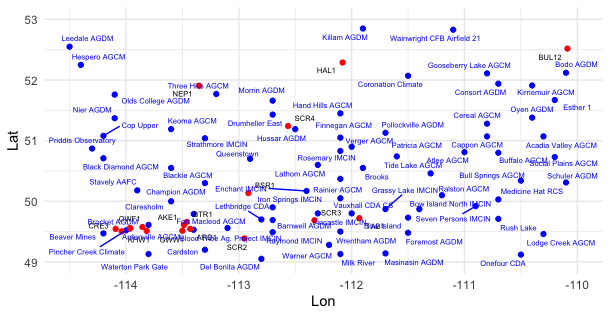}
    \caption{Zoomed view of southern Alberta showing the location of
             wind farms (red) and weather stations (blue). Farms in this
             region benefit from short interpolation distances and achieve
             the highest goodness-of-fit values in Table~\ref{tab:training_results}.
             Isolated farms such as BUL12 (far northeast) and NEP1 are not
             visible at this scale and rely on more distant station
             interpolation, corresponding to lower $\overline{R^2}$.}
    \label{fig:map_zoom}
\end{figure}

\subsubsection{Out-of-Sample Validation (2020-2023)}
\label{sec:oos_validation}

To validate the temporal transferability of the sigmoid model, we perform an out-of-sample test in which the averaged training-period parameters $(\bar\alpha_i, \bar\beta_i, \bar{v}_{\min,i}, \bar{v}_{\max,i})$ from 2016-2019 are applied to hourly wind speed data from 2020-2023 without re-fitting. For each farm $i$ and test-period hour $t$, the estimated capacity factor is computed as
\begin{equation}\widehat{\mathrm{ECF}}_{i,t} \;=\;
    \begin{cases}
        0, & v_t < \bar{v}_{\min,i}, \\[4pt]
        \sigma(v_t;\,\bar\alpha_i,\bar\beta_i),
        & \bar{v}_{\min,i} \leq v_t \leq \bar{v}_{\max,i}, \\[4pt]
        1, & v_t > \bar{v}_{\max,i},
    \end{cases}
    \label{eq:oos_predict}
\end{equation}
and compared against the realised capacity factor $\mathrm{ACF}_{i,t}$.

Table~\ref{tab:oos_validation} presents the out-of-sample results for all 17 farms. The overall mean hourly $R^2$ is 0.63 and the mean correlation is 0.81, closely tracking the in-sample values of 0.66 and 0.82 from Table~\ref{tab:training_results}. This consistency indicates that the sigmoid parameters estimated on 2016-2019 data generalise well to the subsequent four-year period, with no evidence of degradation over time.
\begin{table}[htbp!]
\centering
\begin{tabular}{lcccccc}
\hline
\textbf{Farm} & $R^2$ & $r$ & $\overline{\mathrm{ACF}}$ & $\overline{\mathrm{ECF}}$ & \textbf{Bias} & \textbf{CF\_APE\,(\%)} \\ \hline
AKE1  & 0.708 & 0.853 & 0.347 & 0.362 &  0.0145    & 4.18  \\
ARD1  & 0.712 & 0.857 & 0.377 & 0.383 &  0.0063    & 1.67  \\
BSR1  & 0.646 & 0.809 & 0.409 & 0.435 &  0.0262    & 6.42  \\
BTR1  & 0.683 & 0.830 & 0.359 & 0.354 & $-$0.0044  & 1.23  \\
BUL12 & 0.377 & 0.670 & 0.450 & 0.399 & $-$0.0501  & 11.1  \\
CR1   & 0.756 & 0.879 & 0.319 & 0.327 &  0.0084    & 2.62  \\
CRR1  & 0.754 & 0.874 & 0.346 & 0.362 &  0.0155    & 4.46  \\
GWW1  & 0.729 & 0.864 & 0.383 & 0.388 &  0.0048    & 1.24  \\
IEW1  & 0.626 & 0.805 & 0.350 & 0.352 &  0.0021    & 0.594 \\
IEW2  & 0.695 & 0.840 & 0.314 & 0.314 &  0.0003    & 0.081 \\
KHW1  & 0.714 & 0.853 & 0.375 & 0.387 &  0.0114    & 3.03  \\
NEP1  & 0.476 & 0.696 & 0.283 & 0.259 & $-$0.0245  & 8.65  \\
OWF1  & 0.716 & 0.858 & 0.367 & 0.387 &  0.0204    & 5.56  \\
SCR2  & 0.562 & 0.771 & 0.358 & 0.349 & $-$0.0100  & 2.78  \\
SCR3  & 0.554 & 0.758 & 0.361 & 0.363 &  0.0024    & 0.652 \\
SCR4  & 0.526 & 0.732 & 0.386 & 0.381 & $-$0.0051  & 1.33  \\
TAB1  & 0.538 & 0.748 & 0.323 & 0.340 &  0.0173    & 5.37  \\ \hline
\textbf{Mean} & \textbf{0.634} & \textbf{0.805} & \textbf{0.359} & \textbf{0.363} & \textbf{0.0009} & \textbf{3.58} \\ \hline
\end{tabular}
\caption{Out-of-sample validation results (2020--2023) using averaged sigmoid parameters from 2016--2019. $R^2$ and $r$: hourly-level coefficient of determination and correlation. $\overline{\mathrm{ACF}}$ and $\overline{\mathrm{ECF}}$: actual and estimated mean capacity factors averaged over all test-period hours. Bias~$= \overline{\mathrm{ECF}} - \overline{\mathrm{ACF}}$. CF\_APE: absolute percentage error of the mean capacity factor.}
\label{tab:oos_validation}
\end{table}

Since our objective is to obtain a reliable estimate of the \emph{expected} capacity factor at each location for the mean field game formulation, the most relevant validation metric is the accuracy of the mean capacity factor rather than the hourly prediction error. The mean bias across all farms is 0.0009 (effectively zero), and 14 of the 17 farms have $|\text{bias}| < 0.025$. The mean absolute percentage error (CF\_APE) of the expected capacity factor is 3.6\% overall and below 5\% for 12 of the 17 farms. 

A linear regression of $\overline{\mathrm{ECF}}$ on $\overline{\mathrm{ACF}}$ yields a slope of 0.90 ($p < 10^{-6}$), an insignificant intercept ($p = 0.41$), and $R^2 = 0.79$. Forcing the regression through the origin gives a slope of 1.005 with standard error 0.012, confirming near-perfect proportionality between estimated and actual expected capacity factors. Figure~\ref{fig:ACFvsECF} shows the plots for the out-of-sample mean estimated capacity 
factor against the mean actual capacity factor for each of the considered wind farms 
over the period 2020-2023.

The two farms with the weakest performance, BUL12 (CF\_APE = 11.1\%) and NEP1 (CF\_APE = 8.7\%), are the same farms that exhibited the lowest in-sample $R^2$ values. 
\begin{figure}[htbp]
    \centering
    \includegraphics[width=0.65\textwidth]{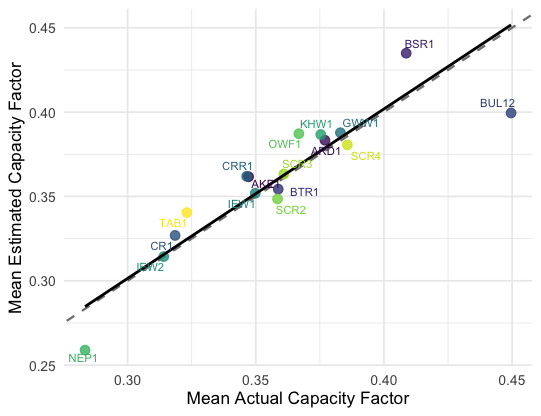}
    \caption{Out-of-sample mean estimated vs mean actual capacity factor 
         for each wind farm (2020-2023). The dashed line is
         ($y = x$); the solid line is the no-intercept regression 
         (slope $= 1.005$, $R^2 = 0.79$). Points cluster tightly around 
         the dashed line, confirming that the averaged sigmoid parameters 
         recover the expected capacity factor at each location.}
    \label{fig:ACFvsECF}
\end{figure}

Since agents in the MFG are assumed to be technologically identical, we require a single representative power curve. To do this, we restrict our attention to farms with mean in-sample $R^2 \geq 0.75$, which gives us a subset of four farms: CRR1, CR1, ARD1 and AKE1. The representative sigmoid parameters $\bar{\alpha},\bar{\beta},\bar{v}_{min}$ and $\bar{v}_{max}$ are then computed as the mean of the training-period estimates across this subset and are reported in Table~\ref{tab:rep_sigmoid}. We will use these estimates in our MFG.
\begin{table}[htbp!]
\centering
\begin{tabular}{cccc}
\hline
$\bar{\alpha}$ (m s$^{-1}$) & $\bar{\beta}$ & $\bar{v}_{\min}$ (m s$^{-1}$) & $\bar{v}_{\max}$ (m s$^{-1}$) \\
\hline
12.298 & 0.352 & 5.839 & 21.407 \\
\hline
\end{tabular}
\caption{Representative sigmoid power curve parameters computed as the mean over the four farms with mean in-sample $R^2 \geq 0.75$ (CRR1, CR1, ARD1, AKE1) across all training windows.}
\label{tab:rep_sigmoid}
\end{table}

Now that we empirically justified that our sigmoid approximation gives a reliable estimate for the average capacity factor, we use the ACIS weather station data \citep{ACIS2024} over our chosen hub height of $100\,m$ over a grid covering Alberta under various policy scenarios(see Section~\ref{sec:policy_AB}) to get the estimated average capacity factors visualized as heatmaps in Figure~\ref{fig:heatmaps_CF}. Here, each grid point represents a $35 \times 35$\,km$^2$ area.

\section{\label{sec:Covariance_Model} Covariance structure of windfarms in Alberta}
We model the covariance structure of standardized hourly wind power output across Alberta's windfarms from \citep{AESO_MeteredVolumes} using a two step approach.
\begin{enumerate}
    \item Principal Component Analysis (PCA) to capture large-scale spatial patterns.
    \item A parametric spatial residual model to account for localized correlations.
\end{enumerate}

Let $\mathbf{X}\in \mathbb{R}^{T\times N}$ be the matrix of standardized hourly power values, where $T$ is the number of hourly observations and $N$ is the number of wind farms. The PCA decomposition of the empirical covariance matrix is given by: $\boldsymbol{\widehat{C}_z} = \boldsymbol{\Psi} \boldsymbol{\Lambda} \boldsymbol{\Psi}'$ where 
\begin{itemize}
    \item $\boldsymbol{\Psi}=(\boldsymbol{\psi}_1,...,\boldsymbol{\psi}_N) \in\mathbb{R}^{N\times N}$ is the matrix of spatial eigenvectors or ``spatial modes". Each spatial model $\psi_k\in \mathbb{R}^N$ is defined over windfarm locations $X_1,...,X_N$ and is represented as: 
    \[
    \boldsymbol{\psi}_k=(\psi_k(X_1),...,\psi_k(X_N))^T \quad for\quad k=1,...,N
    \]
    \item $\boldsymbol{\Lambda}=diag(\lambda_1,...,\lambda_N)$ represents a diagonal matrix of non-negative eigenvalues, with each $\lambda_k$ representing the variance explained by the $k$-th spatial mode.
\end{itemize}

Examining the empirical data semi-annually from 2016 through 2018, we find that the first three Empirical Orthogonal functions (EOFs) successfully capture over 85$\%$ of the variation in the data.  We construct a rank 3 approximation of the covariance structure and analyze the residual correlations. Figure~\ref{fig:Residual_fit} shows the behavior of the correlation of these residuals against distance. We see a sharp initial decay in correlation and smooth oscillations that fades quickly. The residuals appear mostly uncorrelated and random, especially beyond 100 -150km. We can see that short range correlations exist and we need a model that captures this localized behavior. We propose a purely spatial dampened oscillation model of the form:
\begin{equation}\label{eq:covariance_kernel}
    C_X(h)=e^{-\lambda||h||}cos(\theta||h||)
\end{equation}
where, $\lambda$: Fitted Decay parameter, $\theta$: Fitted Oscillation frequency parameter and $h$: Distance between wind farms.

\begin{figure}[htbp]
    \centering
    \includegraphics[width=0.65\textwidth]{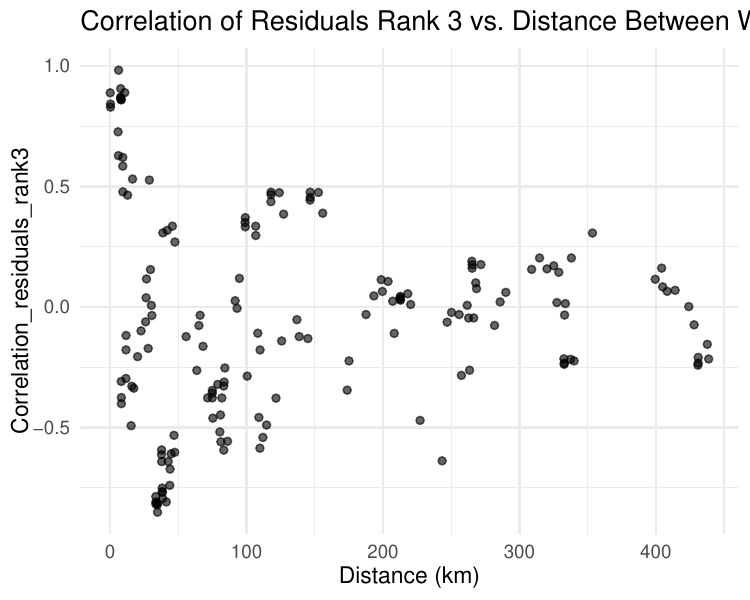}
    \caption{Residual correlation vs distance}
    \label{fig:Residual_fit}
\end{figure}

This form is not only empirically motivated but also theoretically justified: it satisfies the conditions of \textit{Bochner’s Theorem} \citep{Stein1999}, which guarantees that any continuous, positive-definite, and stationary covariance function corresponds to a valid spectral density. The product of an exponential decay and a cosine function yields a stationary covariance structure with a decaying, oscillatory pattern that remains positive definite making it suitable for spatial modeling of residual correlations.

We use the Weighted Least Squares (WLS) estimator to estimate the values of our fitted parameters. Recall WLS is defined as:
\begin{equation}\hat{\theta}_{WLS}=argmin\sum_h\sum_u\left(\frac{\hat{C}_X(h)-C_X(h|\Theta)}{1-C_X(h|\Theta)}\right)^2,
\end{equation}
where $\hat{C}_X(h)$ is the empirical correlation and $C_X(h|\Theta)$ is the fitted correlation. We see that this model captures the behaviour over the short range distances and we proceed to build the full covariance using rank 3 approximated covariance model and the proposed spatially dampened oscillation covariance model for residuals.

The variance of residuals can be found by:
\begin{equation}
    Var[R(X_m)]=1-\sum_{i=1}^3\psi_i^2(X_m)\lambda_i
\end{equation}
The covariance between windfarms located at $X_l$ and $X_r$ is given by:
\begin{equation}Cov(W(X_l),W(X_r))=\sum_{i=1}^3\psi_i(X_l)\psi_i(X_r)\lambda_i+Cov(R(X_l),R(X_r))
\end{equation}
\noindent Let $\boldsymbol{\Phi}^3 = (\boldsymbol{\psi}_1, \boldsymbol{\psi}_2, \boldsymbol{\psi}_3) \in \mathbb{R}^{N \times 3}$ be the matrix of the first three spatial principal components evaluated at $N$ wind farm locations, and let $\boldsymbol{\Lambda_3} = \mathrm{diag}(\lambda_1, \lambda_2, \lambda_3)$ be the diagonal matrix of associated eigenvalues.

Define $\mathbf{H} \in \mathbb{R}^{N \times N}$ as the distance matrix where $\mathbf{H}_{ij} = \|X_i - X_j\|$ is the Euclidean distance between wind farms $i$ and $j$.  Then, the full covariance matrix $\boldsymbol{C} \in \mathbb{R}^{N \times N}$ is given by:
\begin{equation}
    \boldsymbol{C} = \boldsymbol{\Phi}^3 \boldsymbol{\Lambda}_3 (\boldsymbol{\Phi}^3)^\top + \boldsymbol{R}
\end{equation}
where the residual covariance matrix $\boldsymbol{R}$ is defined element wise by:
\begin{equation}
    \boldsymbol{R}_{ij} = 
\exp(-\hat{\lambda} \mathbf{H}_{ij}) \cos(\hat{\theta} \mathbf{H}_{ij}) 
\cdot \sqrt{\text{Var}(R(X_i)) \cdot \text{Var}(R(X_j))}
\end{equation}

Note that we multiply by the residual standard deviations to convert correlation to covariance. We assess the quality of the fitted covariance structure by comparing it against the empirical covariance matrix derived from the original data. The fitted matrix combines the low-rank reconstruction from the first three principal components and a residual term modeled using a dampened oscillation kernel. 

We use the proposed covariance structure on semi-annual windows from years 2016-2018. Panel A (ACF vs. ECF) evaluates the model fit to sigmoid-estimated capacity factors. Panel B (ACF vs. ACF) evaluates the covariance model in isolation, fit directly to observed capacity factors. Pairwise metrics are computed on the off-diagonal entries of the covariance matrix. The \texttt{CovPerCap} metrics come from a linear regression of modeled versus empirical capacity-weighted row sums across farms. To evaluate the overall fit of the covariance structure, we compute the Frobenius norm of the error matrix and normalize it by the Frobenius norm of the empirical covariance matrix:
\[
\text{Relative Error} = \frac{\| \hat{\boldsymbol{C}} - \boldsymbol{C}_{\text{emp}} \|_F}{\| \boldsymbol{C}_{\text{emp}} \|_F}
\]
\begin{table}[htbp]
\centering
\caption{Validation of the modeled covariance against empirical covariance across six half-year windows.}
\label{tab:cov_validation}
\small
\setlength{\tabcolsep}{4pt}
\begin{tabular}{llcccccc}
\hline
\multirow{2}{*}{\textbf{Year}} & \multirow{2}{*}{\textbf{Period}} &
\multicolumn{4}{c}{\textbf{Pairwise covariance}} &
\multicolumn{2}{c}{\textbf{CovPerCap}} \\
\cline{3-6} \cline{7-8}
& & Cor & Frob. err & Slope & $R^2$ & Slope & $R^2$ \\
\hline
\multicolumn{8}{l}{\textit{Panel A: ACF vs. ECF (Panel A)}} \\
\hline
2016 & Jan--Jun & 0.953 & 0.162 & 0.935 & 0.953 & 1.068 & 0.929 \\
2016 & Jul--Dec & 0.974 & 0.161 & 0.950 & 0.948 & 1.085 & 0.907 \\
2017 & Jan--Jun & 0.961 & 0.126 & 0.947 & 0.961 & 1.019 & 0.958 \\
2017 & Jul--Dec & 0.945 & 0.139 & 1.024 & 0.946 & 1.071 & 0.938 \\
2018 & Jan--Jun & 0.975 & 0.154 & 0.926 & 0.950 & 0.970 & 0.922 \\
2018 & Jul--Dec & 0.973 & 0.142 & 1.030 & 0.948 & 1.123 & 0.915 \\
\hline
\multicolumn{8}{l}{\textit{Panel B: ACF vs. ACF (Panel B)}} \\
\hline
2016 & Jan--Jun & 0.991 & 0.064 & 1.008 & 0.983 & 0.987 & 0.997 \\
2016 & Jul--Dec & 0.990 & 0.066 & 1.007 & 0.980 & 0.973 & 0.993 \\
2017 & Jan--Jun & 0.993 & 0.056 & 0.999 & 0.985 & 0.979 & 0.997 \\
2017 & Jul--Dec & 0.990 & 0.066 & 1.007 & 0.980 & 0.979 & 0.997 \\
2018 & Jan--Jun & 0.992 & 0.062 & 1.007 & 0.984 & 0.987 & 0.998 \\
2018 & Jul--Dec & 0.993 & 0.052 & 1.003 & 0.986 & 0.981 & 0.998 \\
\hline
\end{tabular}
\end{table}

The results in Table~\ref{tab:cov_validation} demonstrate strong agreement between the modeled and empirical covariance structures across all six half-year validation windows. In Panel A, pairwise correlations range from 0.945 to 0.975, with Frobenius errors between 0.126 and 0.162, indicating that the sigmoid-estimated capacity factors introduce only modest additional 
error relative to the directly observed case. Panel B confirms this, showing near-perfect pairwise correlations (0.990--0.993) and Frobenius errors below 0.07 when the covariance model is fit directly to observed capacity factors. The CovPerCap slopes remain close to unity in both panels, with $R^2$ values consistently exceeding 0.90, confirming that 
the modeled covariance accurately reproduces the spatial dependence structure across farms. Taken together, these results validate the use of the dampened oscillation kernel~\eqref{eq:covariance_kernel} as a reliable 
approximation of spatial residual correlations among Alberta wind farms.

Having validated the proposed covariance model, we extend the analysis to a provincial grid covering Alberta, where each grid point represents a $35 \times 35$\,km$^2$ area. Under the various policy scenarios introduced by the Government of Alberta (see Section~\ref{sec:policy_AB}), the total covariance at each grid point is computed and visualized as heatmaps in Figure~\ref{fig:heatmaps_Cov}. Here it should be noted that the visualization is the aggregated quantity $\sum_{x'} Cov(W(x),W(x'))$, evaluated at every grid point $x$.

\section{Static MFG to Alberta}\label{sec:mfg_solution}

\begin{figure*}[htbp!]
    \centering
    \begin{subfigure}[t]{0.45\textwidth}
        \centering
        \includegraphics[width=\textwidth]{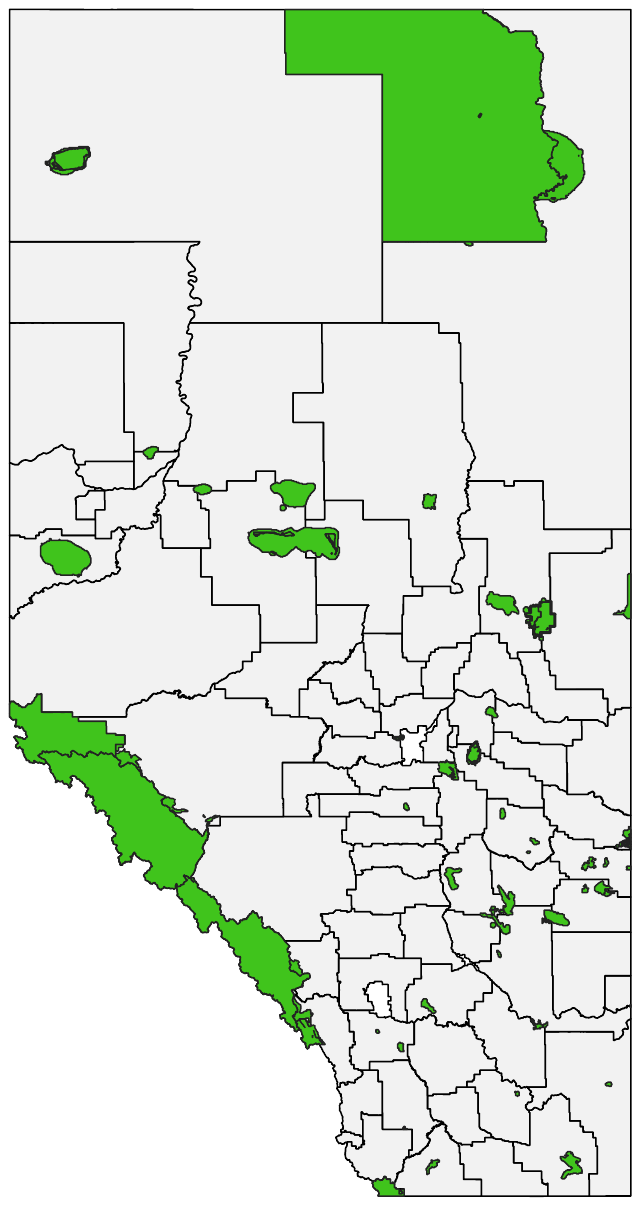}
        \caption{ESA AB}
        \label{fig:AB_ESA}
    \end{subfigure}
    \hfill
    \begin{subfigure}[t]{0.45\textwidth}
        \centering
        \includegraphics[width=\textwidth]{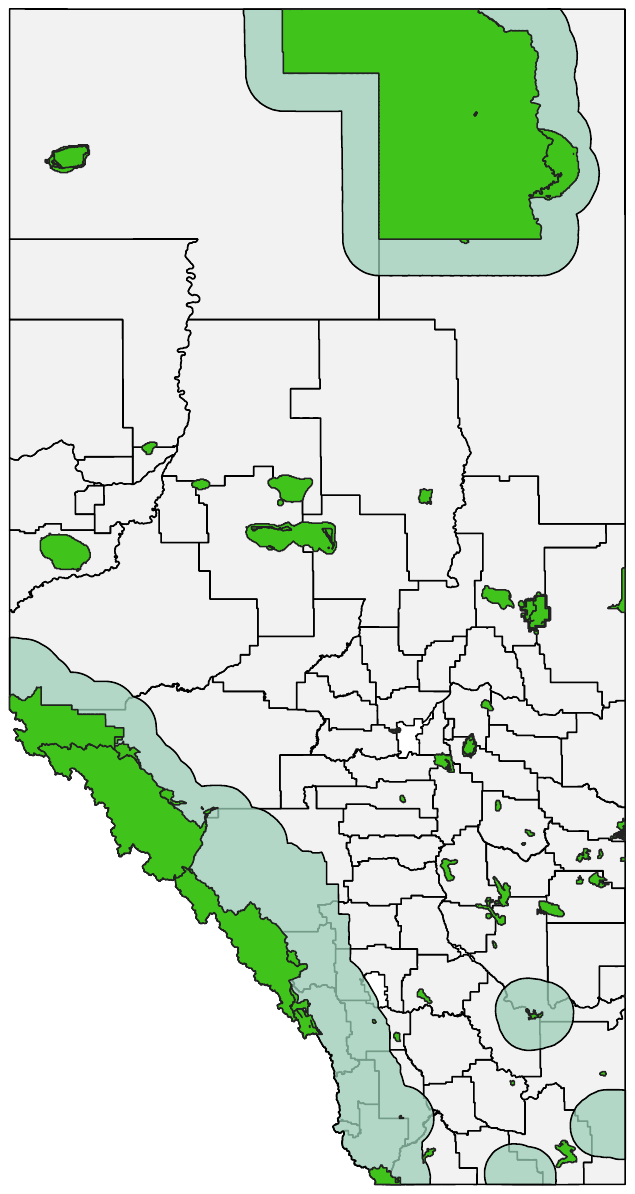}
        \caption{Viewscapes AB}
        \label{fig:AB_viewscapes}
    \end{subfigure}

    \vspace{0.5em}

    \begin{subfigure}[t]{0.45\textwidth}
        \centering
        \includegraphics[width=\textwidth]{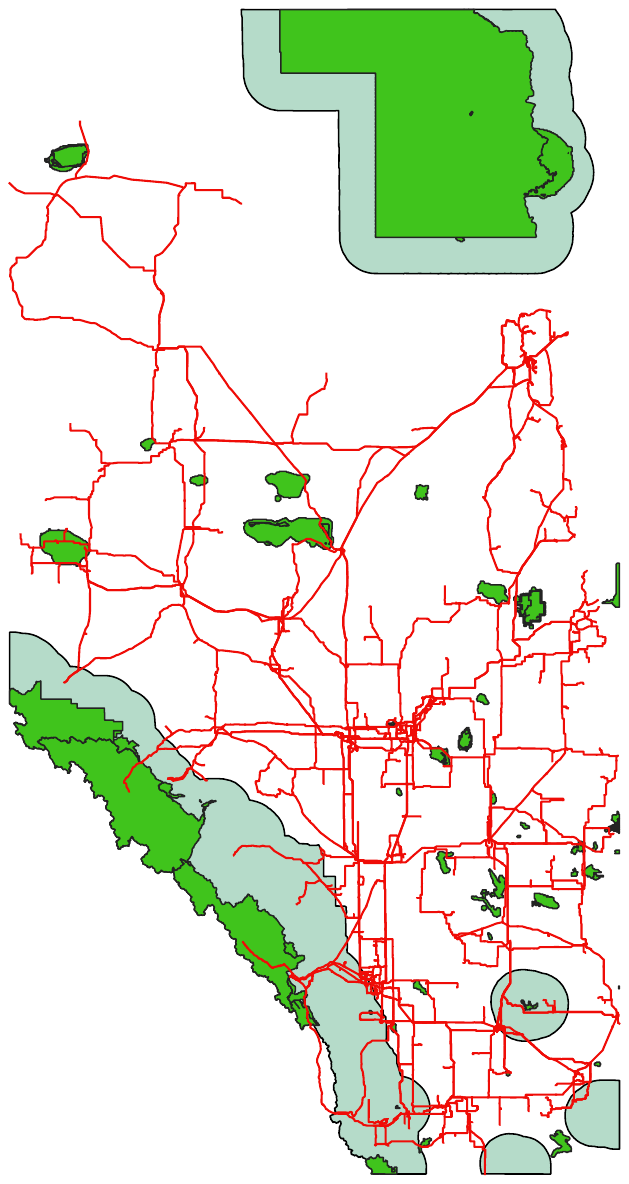}
        \caption{Transmission AB}
        \label{fig:AB_transmission}
    \end{subfigure}
    \caption{Policy scenarios across Alberta. (a)~Environmentally Sensitive Areas (green) excluded from development. (b)~Viewscape exclusion zones around designated scenic areas. (c)~Existing transmission infrastructure (red); under the transmission scenario, development is restricted to locations within 10\,km of these lines.}
    \label{fig:Policy_Scenarios}
\end{figure*}

\subsection{Policy Scenarios in Alberta}\label{sec:policy_AB}
Now we implement the static MFG theory discussed above to Alberta. Our goal is to compare the optimal capacity allocation and revenue distribution across four policy scenarios representing different combinations of short and long term constraints to development. The total land area of Alberta is 661,848\,km$^2$. We cover this area using grid points where each point covers a $35 \times 35$\,km$^2$ area. In the first scenario, an approximation of the rules pre 2024, we impose only minimal land restrictions, which exclude environmentally significant areas (ESAs) of international and national importance \citep{ESA2009} and the metropolitan areas of Calgary and Edmonton \citep{AESO_Municipalities}, yielding 1,434 grid points across Alberta. The second scenario incorporates the viewscape restrictions 
introduced by the Government of Alberta in February 2024 
\citep{Alberta_Viewscape2024} in addition to the minimal 
restrictions, with the spatial extent of restricted areas obtained from the AESO geospatial layer 
\citep{AESO_Viewscapes}, reducing the available grid to 
1228 points. The third scenario represents the pre-viewscape restriction transmission availability, restricting development to locations within 10\,km of existing transmission infrastructure \citep{AESO_Transmission}, which reduces the grid to 408 points. The fourth scenario combines both viewscape and transmission constraints on top of the minimal restrictions, yielding the most restrictive feasible set of 357 grid points. Figure~\ref{fig:Policy_Scenarios} shows the grid under each policy scenario discussed above.

We consider a capacity expansion goal of $\tilde{C} = 15,000$ MW in total. We enforce the density constraint that each area cannot contain more than $C_{max} = 300$ MW of capacity.


To find the MFG solution, we solve the quadratic programming problem
\begin{eqnarray}
    &&\max -\frac{1}{2}  \sigma \, \boldsymbol{w}^\top \mathbf{C} \, \boldsymbol{w} + \lambda \, \boldsymbol{E}[W]^\top \boldsymbol{w}\label{optimize2}
\end{eqnarray}
subject to
\[
\begin{aligned}
    w_1+...+w_{n} = 15000\\
    0 \leq w_i \leq 300 \text{  for $i=1,...n$},
\end{aligned}   
\] 
using the estimated capacity factors (Figure~~\ref{fig:heatmaps_CF}) and covariance (Figure\ref{fig:heatmaps_Cov}) with the estimated values of the parameters $\hat{\lambda}=8281$ and $\hat{\sigma}=-5.901$ to find the total capacity $w$ invested at each location under each policy scenario.

In all scenarios we consider, we check that the solution of the problem \eqref{optimize2} satisfies the non-negativity condition, namely $\tilde{F}(X^j,\tilde{\mu})\geq 0$ for all $X^j$ such that $w_j>0$ which verifies that $w$ correspond to the unique MFG solution with total capacity of $15000$ MW.
\begin{figure*}[htbp!]
    \centering
    \begin{subfigure}[t]{0.45\textwidth}
        \centering
        \includegraphics[width=\textwidth]{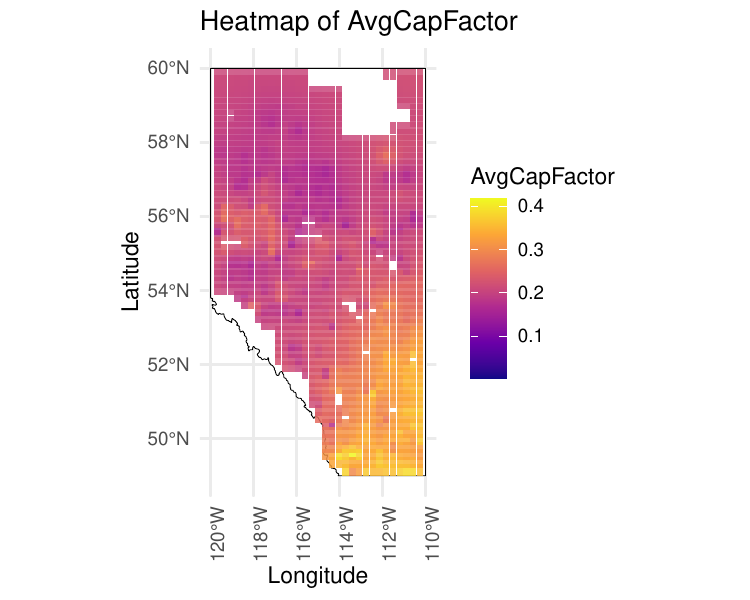}
        \caption{Minimal Restrictions}
        \label{fig:HM_CF_minimal}
    \end{subfigure}
    \hfill
    \begin{subfigure}[t]{0.45\textwidth}
        \centering
        \includegraphics[width=\textwidth]{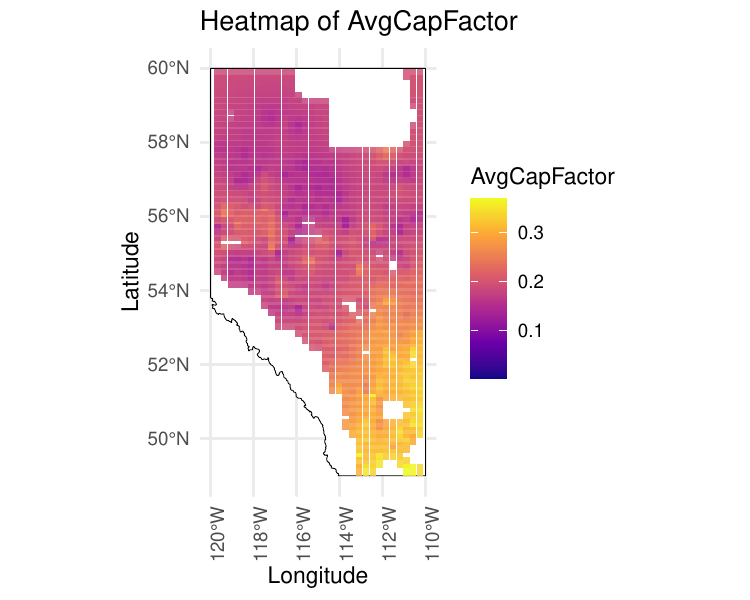}
        \caption{Viewscape Restrictions}
        \label{fig:HM_CF_viewscapes}
    \end{subfigure}

    \vspace{0.5em}

    \begin{subfigure}[t]{0.45\textwidth}
        \centering
        \includegraphics[width=\textwidth]{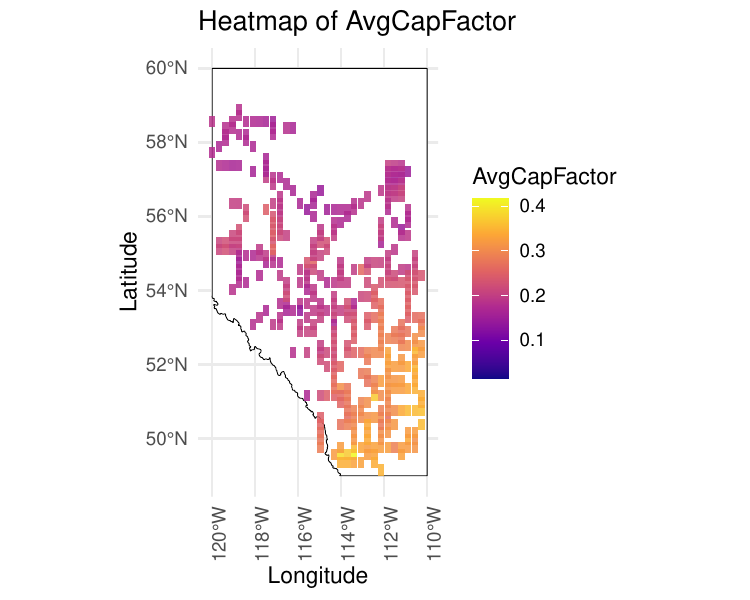}
        \caption{Transmission Restrictions}
        \label{fig:HM_CF_transmission}
    \end{subfigure}
    \hfill
    \begin{subfigure}[t]{0.45\textwidth}
        \centering
        \includegraphics[width=\textwidth]{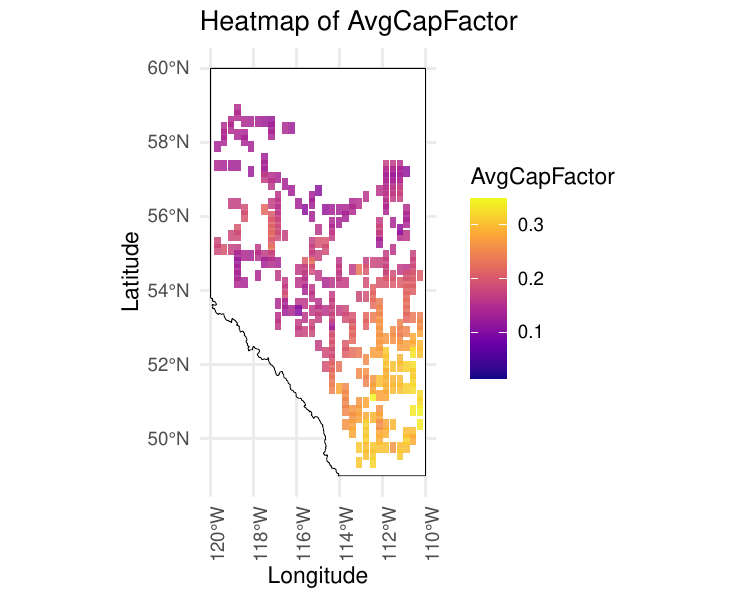}
        \caption{All Restrictions}
        \label{fig:HM_CF_all}
    \end{subfigure}

    \caption{Heatmaps of estimated average capacity factors across the four policy scenarios. Panels~(a)-(b) show the effect of viewscape exclusions on the feasible development area, while panels~(c)-(d) illustrate the additional spatial concentration imposed by the 10\,km transmission proximity constraint.}
    \label{fig:heatmaps_CF}
\end{figure*}

\begin{figure*}[htbp!]
    \centering
    \begin{subfigure}[t]{0.45\textwidth}
        \centering
        \includegraphics[width=\textwidth]{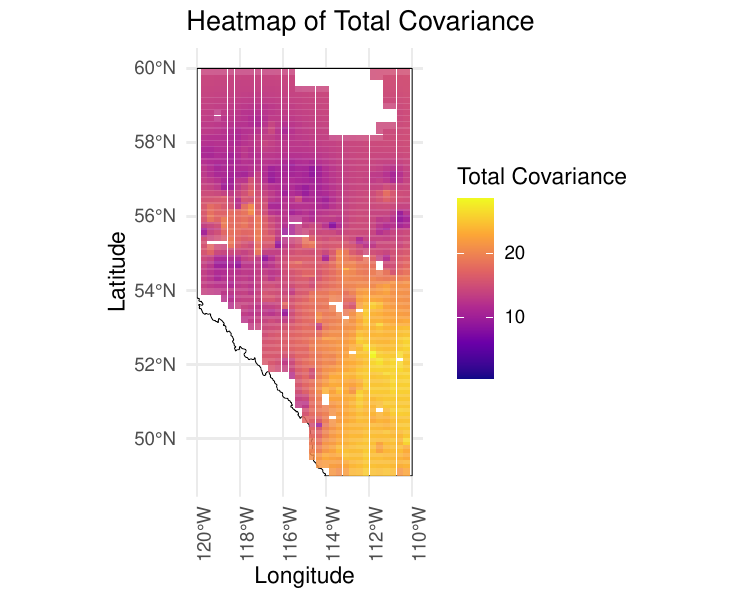}
        \caption{Minimal Restrictions}
        \label{fig:HM_Cov_minimal}
    \end{subfigure}
    \hfill
    \begin{subfigure}[t]{0.45\textwidth}
        \centering
        \includegraphics[width=\textwidth]{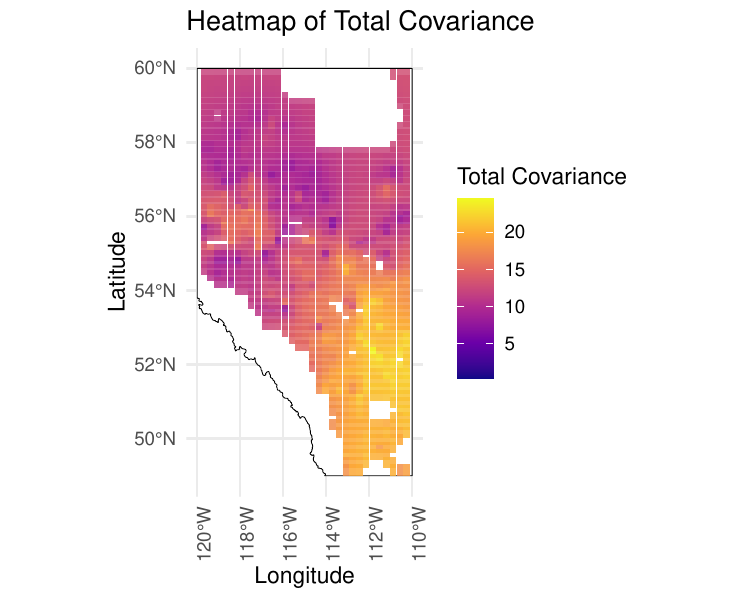}
        \caption{Viewscape Restrictions}
        \label{fig:HM_Cov_viewscapes}
    \end{subfigure}

    \vspace{0.5em}

    \begin{subfigure}[t]{0.45\textwidth}
        \centering
        \includegraphics[width=\textwidth]{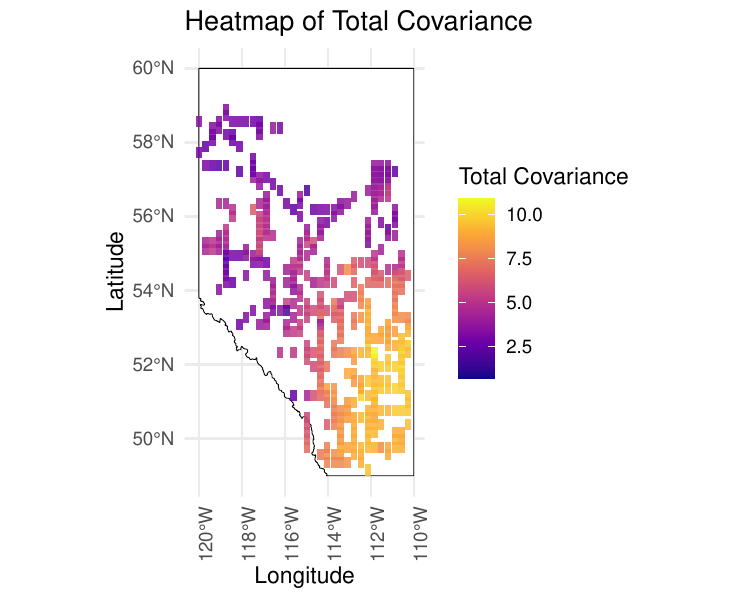}
        \caption{Transmission Restrictions}
        \label{fig:HM_Cov_transmission}
    \end{subfigure}
    \hfill
    \begin{subfigure}[t]{0.45\textwidth}
        \centering
        \includegraphics[width=\textwidth]{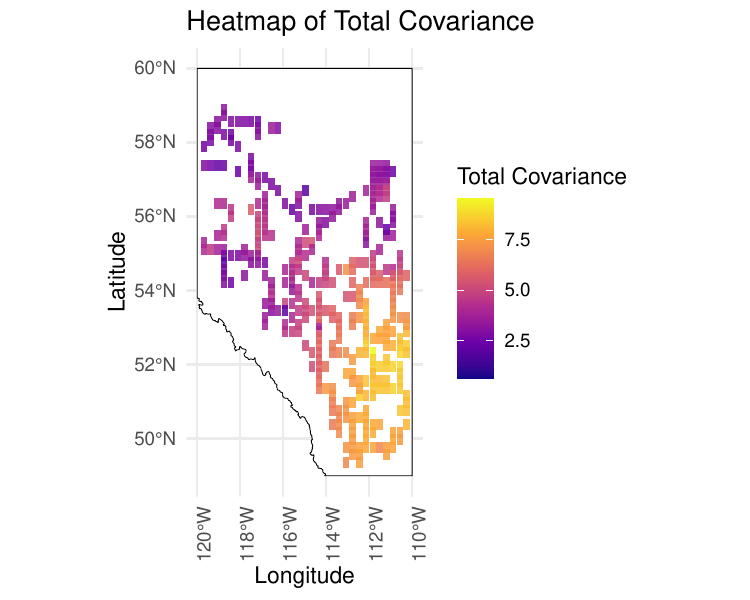}
        \caption{All Restrictions}
        \label{fig:HM_Cov_all}
    \end{subfigure}

    \caption{Heatmaps of total covariance per grid point $\sum_{x'} Cov(W(x),W(x'))$ across the four policy scenarios. Higher covariance indicates locations whose wind output is more correlated with neighbouring sites.}
    \label{fig:heatmaps_Cov}
\end{figure*}

\begin{figure*}[htbp!]
    \centering
    \begin{subfigure}[t]{0.45\textwidth}
        \centering
        \includegraphics[width=\textwidth]{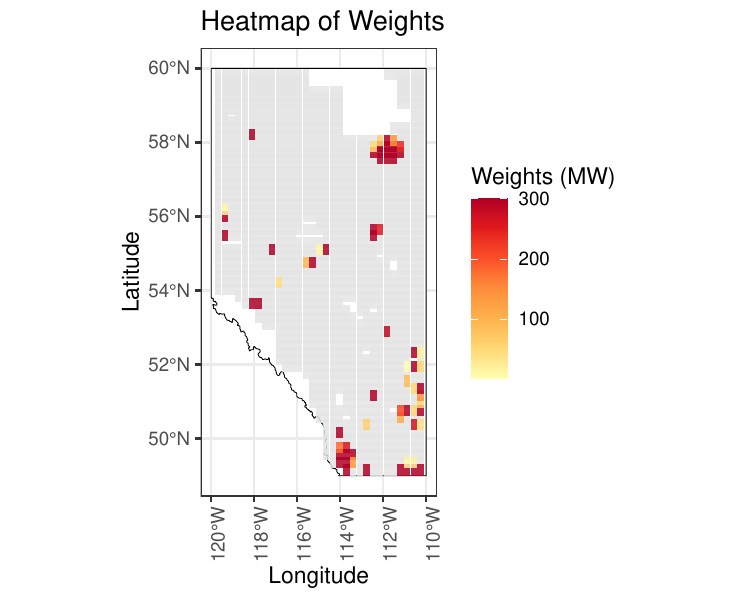}
        \caption{Minimal Restrictions}
        \label{fig:HM_weights_minimal}
    \end{subfigure}
    \hfill
    \begin{subfigure}[t]{0.45\textwidth}
        \centering
        \includegraphics[width=\textwidth]{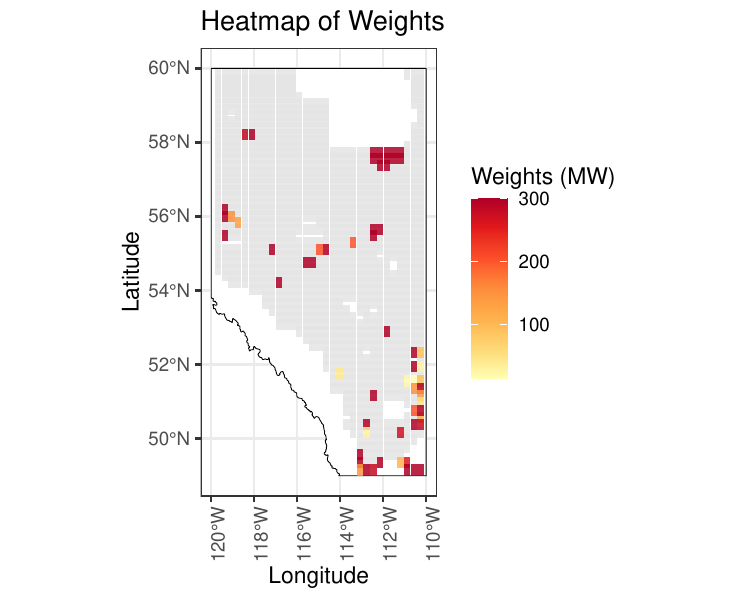}
        \caption{Viewscape Restrictions}
        \label{fig:HM_weights_viewscapes}
    \end{subfigure}

    \vspace{0.5em}

    \begin{subfigure}[t]{0.45\textwidth}
        \centering
        \includegraphics[width=\textwidth]{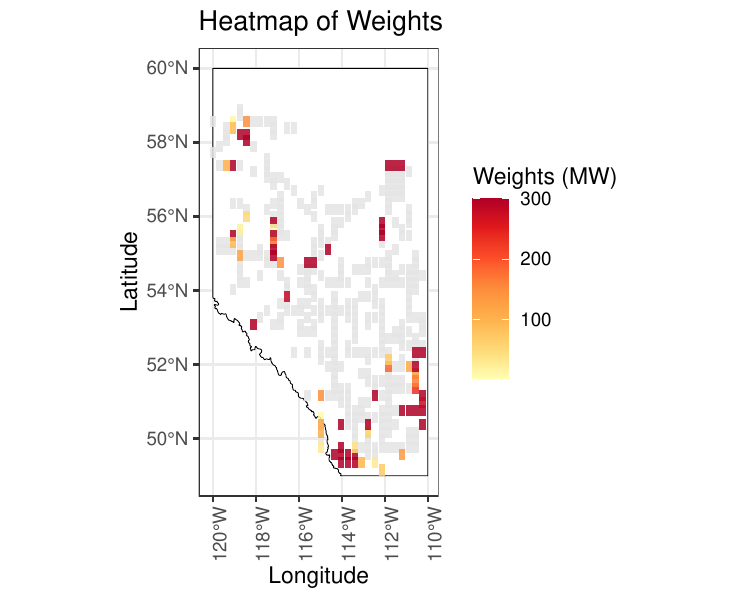}
        \caption{Transmission Restrictions}
        \label{fig:HM_weights_transmission}
    \end{subfigure}
    \hfill
    \begin{subfigure}[t]{0.45\textwidth}
        \centering
        \includegraphics[width=\textwidth]{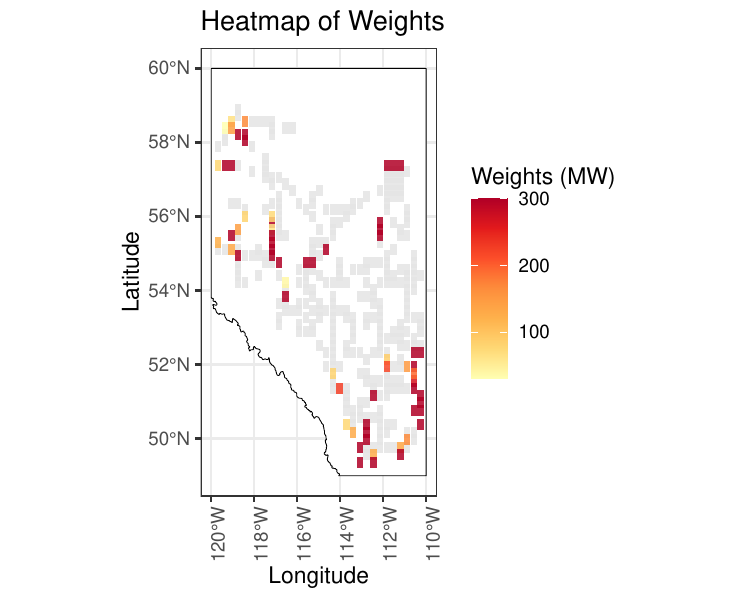}
        \caption{All Restrictions}
        \label{fig:HM_weights_all}
    \end{subfigure}

    \caption{Optimal wind farm capacity weights across the four policy scenarios. Grey areas indicate grid points with zero allocated capacity, while red shading indicates active sites with darker red corresponding to higher allocated capacity.}
    \label{fig:heatmaps_weights}
\end{figure*}

\begin{figure*}[htbp!]
    \centering
    \begin{subfigure}[t]{0.45\textwidth}
        \centering
        \includegraphics[width=\textwidth]{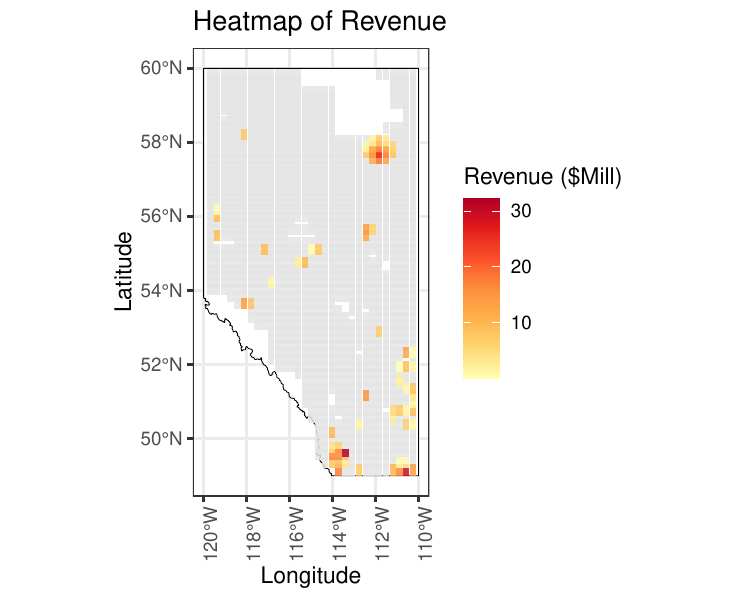}
        \caption{Minimal Restrictions}
        \label{fig:HM_revenue_minimal}
    \end{subfigure}
    \hfill
    \begin{subfigure}[t]{0.45\textwidth}
        \centering
        \includegraphics[width=\textwidth]{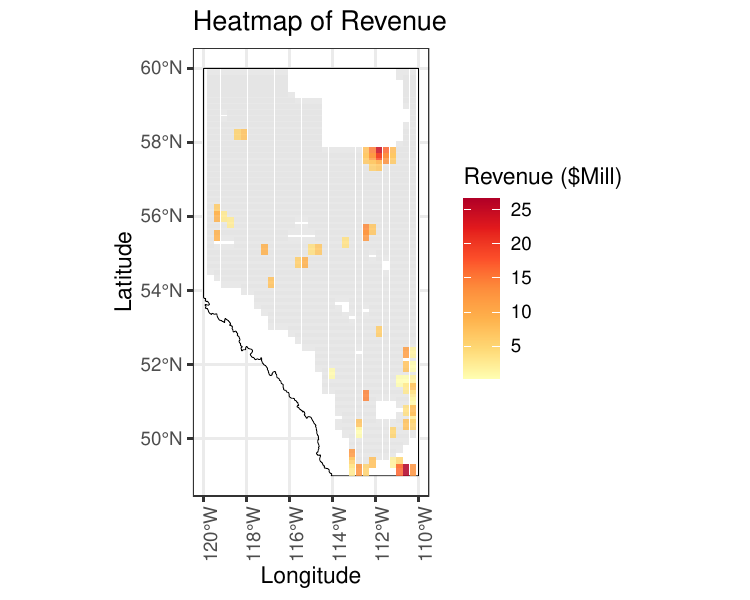}
        \caption{Viewscape Restrictions}
        \label{fig:HM_revenue_viewscapes}
    \end{subfigure}
    \vspace{0.5em}
    \begin{subfigure}[t]{0.45\textwidth}
        \centering
        \includegraphics[width=\textwidth]{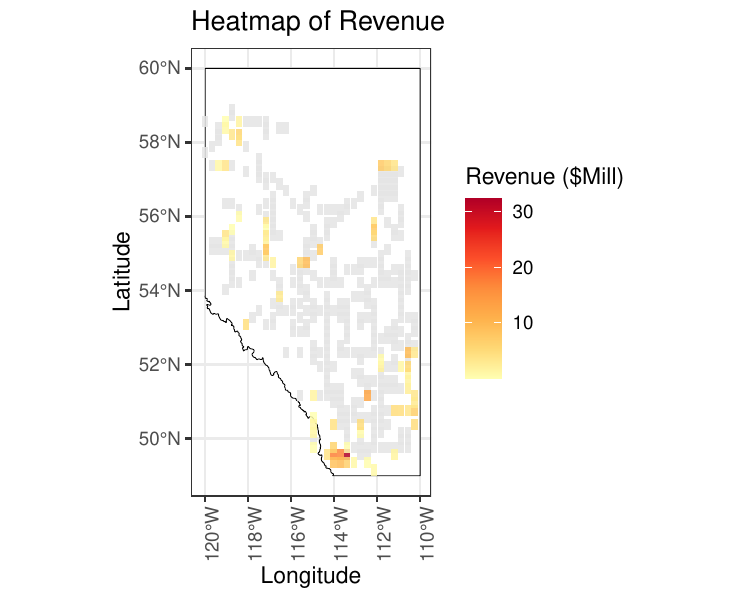}
        \caption{Transmission Restrictions}
        \label{fig:HM_revenue_transmission}
    \end{subfigure}
    \hfill
    \begin{subfigure}[t]{0.45\textwidth}
        \centering
        \includegraphics[width=\textwidth]{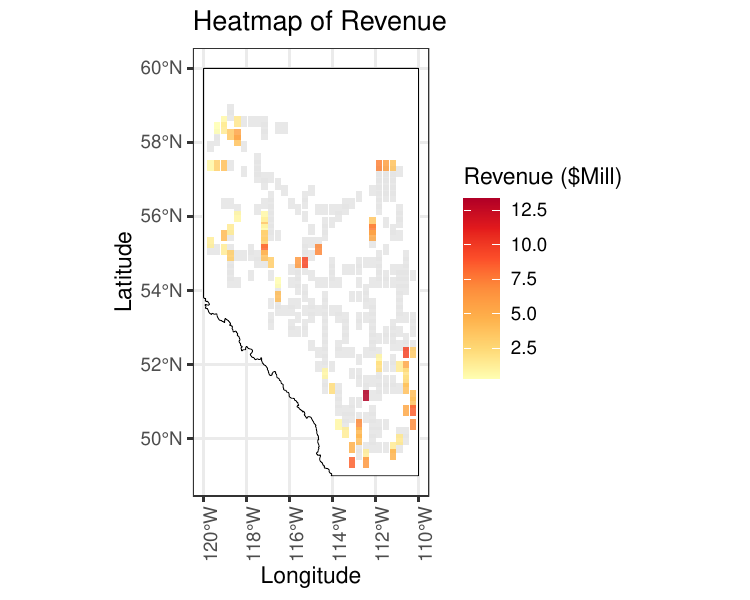}
        \caption{All Restrictions}
        \label{fig:HM_revenue_all}
    \end{subfigure}
    \caption{Spatial distribution of expected revenue across the four policy scenarios. Grey areas indicate grid points with zero allocated capacity, while colored sites show revenue with darker shading corresponding to higher expected revenue.}
    \label{fig:heatmaps_revenue}
\end{figure*}

A side-by-side comparison of the distribution of weights is depicted in Figure~\ref{fig:heatmaps_weights} and their associated revenue in Figure~\ref{fig:heatmaps_revenue}. In all scenarios, the total capacity installed is $\sum _{i = 1} ^ {n} \omega _{i} = 15,000$ MW, where $n$ is the total number of available areas on Alberta's grid under each policy scenario.

\begin{table*}[htbp!]
\centering
\begin{tabular}{lcccc}
\textbf{Metric} & \textbf{Minimal} & \textbf{Viewscape} & \textbf{Transmission} & \textbf{Land + Transmission} \\ \hline
Grid Points          & 1,434  & 1,228 ($\downarrow$14.4\%) & 408 ($\downarrow$71.5\%)   & 357 ($\downarrow$75.1\%)   \\
Active Points        & 68     & 62 ($\downarrow$7.4\%)     & 69 ($\uparrow$1.5\%)       & 64 ($\downarrow$5.9\%)     \\
Capped Points        & 40     & 39 ($\downarrow$2.5\%)     & 39 ($\downarrow$2.5\%)     & 39 ($\downarrow$2.5\%)     \\
Marginal Points      & 28     & 23 ($\downarrow$14.3\%)    & 30 ($\uparrow$7.1\%)       & 25 ($\downarrow$10.7\%)    \\
Total Capacity (MW)  & 15,000 & 15,000                     & 15,000                     & 15,000                     \\
Rev.\ per MW (\$)    & 39,637 & 31,388 ($\downarrow$20.7\%) & 20,223 ($\downarrow$49.0\%) & 16,322 ($\downarrow$58.8\%) \\
Total Revenue (\$M)  & 594.6  & 470.8 ($\downarrow$20.7\%) & 303.4 ($\downarrow$49.0\%) & 244.8 ($\downarrow$58.8\%) \\
Avg.\ Revenue (\$M)  & 8.74   & 7.59 ($\downarrow$14.4\%)  & 4.40 ($\downarrow$49.7\%)  & 3.83 ($\downarrow$56.2\%)  \\

\end{tabular}
\caption{\label{tab:policy_comparison}Comparison of energy and revenue metrics across four policy scenarios. ``Minimal'' imposes ESA and city spatial restrictions. ``Viewscape'' adds visual-impact exclusion zones on top of minimal. ``Transmission'' restricts development to locations within 10\,km of existing transmission infrastructure. ``Land\ + Transmission'' applies all land and transmission constraints simultaneously. Percentage differences are relative to the Minimal scenario.}
\end{table*}

\begin{table*}[htbp!]
\centering
\begin{tabular}{lccccc}
\textbf{Scenario} & \textbf{Total Active} & \textbf{Above \$5M} & \textbf{Below \$5M} & \textbf{\% Above} & \textbf{\% Below} \\ \hline
Minimal Restrictions       & 68 & 46 & 22 & 67.6\% & 32.4\% \\
Viewscape Restrictions     & 62 & 44 & 18 & 71.0\% & 29.0\% \\
Transmission Restrictions  & 69 & 20 & 49 & 29.0\% & 71.0\% \\
Land\ + Transmission      & 64 & 18 & 46 & 28.1\% & 71.9\% \\
\end{tabular}
\caption{\label{tab:viability}Number of active wind development sites above and below the \$5M CAD revenue viability threshold under each policy scenario. Percentages are calculated relative to the total number of active sites within each scenario.}
\end{table*}

In the resulting solutions, we highlight the locations that would have been developed under the earlier policy or without transmission availability constraints,  but are no longer feasible under the given restrictions. Figure~\ref{fig:lostrevenue} presents the geographic distribution of revenue losses associated with the viewscape land restrictions and transmission restrictions, while Table~\ref{tab:revenue_lost_both} details the corresponding sites and magnitudes. 

\begin{figure*}[htbp!]
    \centering
    \begin{subfigure}[t]{0.45\textwidth}
        \centering
        \includegraphics[width=\textwidth]{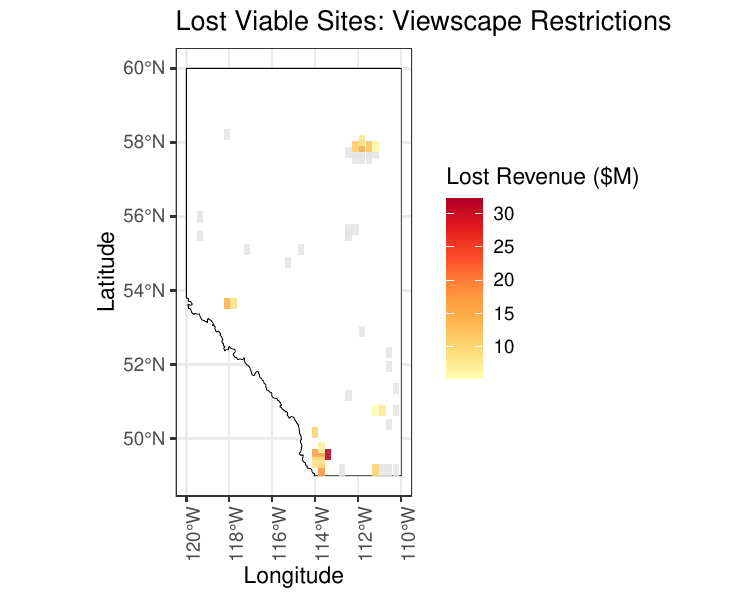}
        \caption{Viewscape Restrictions}
        \label{fig:lost_viewscapes}
    \end{subfigure}
    \hfill
    \begin{subfigure}[t]{0.45\textwidth}
        \centering
        \includegraphics[width=\textwidth]{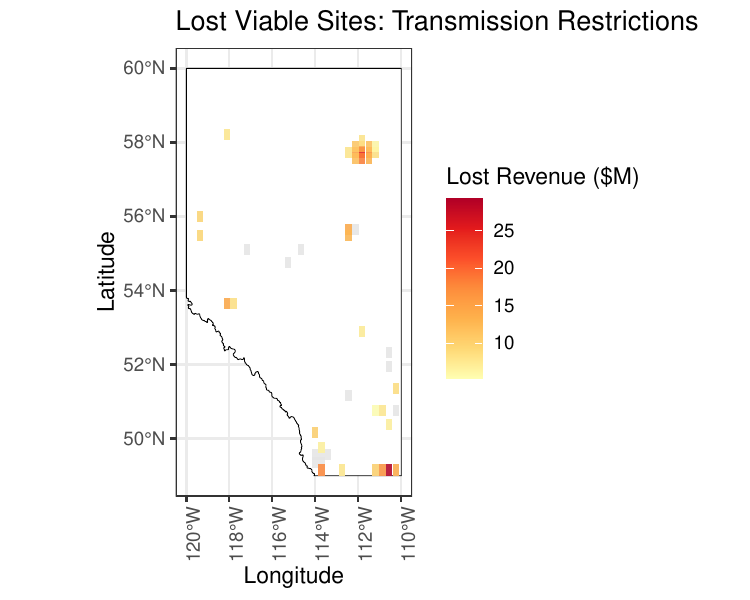}
        \caption{Transmission Restrictions}
        \label{fig:lost_transmission}
    \end{subfigure}

    \vspace{0.5em}

    \begin{subfigure}[t]{0.45\textwidth}
        \centering
        \includegraphics[width=\textwidth]{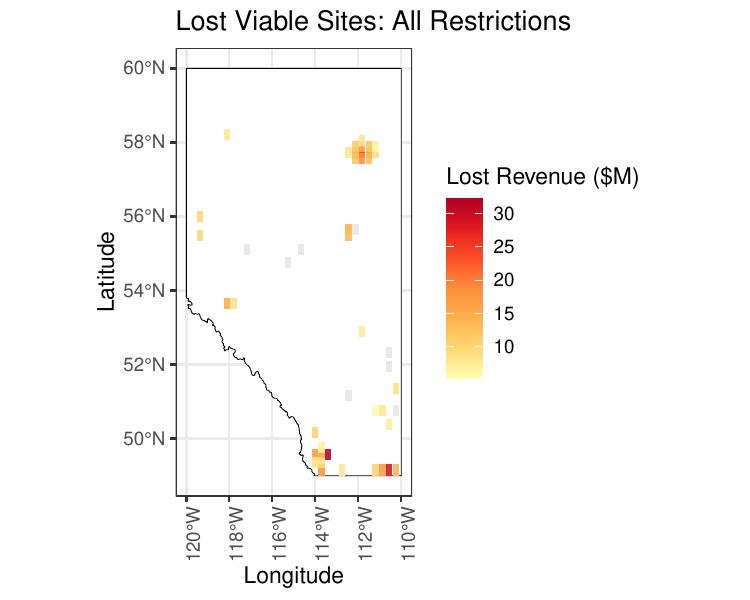}
        \caption{All Restrictions}
        \label{fig:lost_all}
    \end{subfigure}

    \caption{Geographic distribution of lost viable wind development sites relative to the minimal restrictions scenario. Grey tiles indicate retained viable sites; red shading indicates lost sites with darker red corresponding to higher lost revenue.}
    \label{fig:lostrevenue}
\end{figure*}

\begin{table}[htbp!]
\centering
\caption{Revenue and Capacity Lost at Different Locations Under Land \& Viewscape Restrictions (left) and Transmission Restrictions (right)}
\begin{minipage}[t]{0.48\textwidth}
\centering
\small
\begin{tabular}{|r|r|r|r|}
\hline
\textbf{Lon} & \textbf{Lat} & \textbf{Rev. (\$M)} & \textbf{Cap. (MW)} \\ \hline
-113.399 & 49.557 & 36.68 & 300 \\
-114.028 & 49.557 & 23.57 & 300 \\
-113.713 & 49.147 & 22.62 & 300 \\
-113.713 & 49.557 & 21.39 & 300 \\
-111.827 & 57.893 & 18.41 & 300 \\
-118.115 & 53.654 & 17.29 & 300 \\
-111.198 & 49.147 & 16.19 & 300 \\
-111.512 & 57.893 & 15.45 & 300 \\
-112.141 & 57.893 & 14.67 & 300 \\
-114.028 & 50.165 & 14.45 & 300 \\
-113.713 & 49.352 & 14.29 & 300 \\
-114.028 & 49.352 & 13.85 & 300 \\
-110.884 & 50.765 & 13.48 & 300 \\
-111.198 & 50.765 & 12.88 & 300 \\
-113.713 & 49.760 & 10.61 & 251 \\
-111.198 & 50.566 & 10.02 & 237 \\
-110.255 & 51.161 &  5.74 & 136 \\
-110.255 & 51.943 &  5.14 & 122 \\ \hline
\textbf{Total} & & \textbf{286.77} & \textbf{4,887} \\ \hline
\end{tabular}
\end{minipage}
\hfill
\begin{minipage}[t]{0.48\textwidth}
\centering
\small
\begin{tabular}{|r|r|r|r|}
\hline
\textbf{Lon} & \textbf{Lat} & \textbf{Rev. (\$M)} & \textbf{Cap. (MW)} \\ \hline
-113.399 & 49.557 & 36.68 & 300 \\
-110.569 & 49.147 & 35.11 & 300 \\
-111.827 & 57.725 & 29.70 & 300 \\
-114.028 & 49.557 & 23.57 & 300 \\
-113.713 & 49.147 & 22.62 & 300 \\
-111.827 & 57.557 & 22.44 & 300 \\
-110.884 & 49.147 & 22.38 & 300 \\
-113.713 & 49.557 & 21.39 & 300 \\
-111.512 & 57.725 & 21.04 & 300 \\
-110.255 & 49.147 & 19.67 & 300 \\
-112.456 & 55.654 & 19.62 & 300 \\
-112.141 & 57.725 & 19.13 & 300 \\
-111.827 & 57.893 & 18.41 & 300 \\
-112.456 & 55.476 & 17.55 & 300 \\
-118.115 & 53.654 & 17.29 & 300 \\
-111.512 & 57.557 & 16.68 & 300 \\
-111.198 & 49.147 & 16.19 & 300 \\
-112.141 & 57.557 & 15.64 & 300 \\
-111.512 & 57.893 & 15.45 & 300 \\
-119.373 & 56.007 & 15.31 & 300 \\
-112.141 & 57.893 & 14.67 & 300 \\
-112.770 & 49.147 & 14.55 & 300 \\
-114.028 & 50.165 & 14.45 & 300 \\
-116.857 & 54.209 & 14.43 & 300 \\
-110.569 & 50.366 & 14.35 & 300 \\
-113.713 & 49.352 & 14.29 & 300 \\
-114.028 & 49.352 & 13.85 & 300 \\
-110.884 & 50.765 & 13.48 & 300 \\
-113.399 & 55.298 & 13.25 & 300 \\
-111.198 & 50.765 & 12.88 & 300 \\
-113.713 & 49.760 & 10.61 & 251 \\
-111.198 & 50.566 & 10.02 & 237 \\
-119.373 & 55.476 &  8.96 & 212 \\
-110.884 & 49.352 &  6.34 & 150 \\
-110.255 & 51.943 &  5.14 & 122 \\ \hline
\textbf{Total} & & \textbf{596.94} & \textbf{9,921} \\ \hline
\end{tabular}
\end{minipage}
\label{tab:revenue_lost_both}
\end{table}

Table \ref{tab:lost_sites} gives the sites with transmission capability within 10km, but are impacted by the land use policies. These sites represent the short-term impact of viewscape regulations, reflecting the reduction in feasible wind development when visual impact rules are applied to the transmission-constrained solution. The spatial distribution of these lost sites is shown in Figure~\ref{fig:lost_sites}.

\begin{figure}[ht]
\centering
\includegraphics[width=0.8\textwidth]{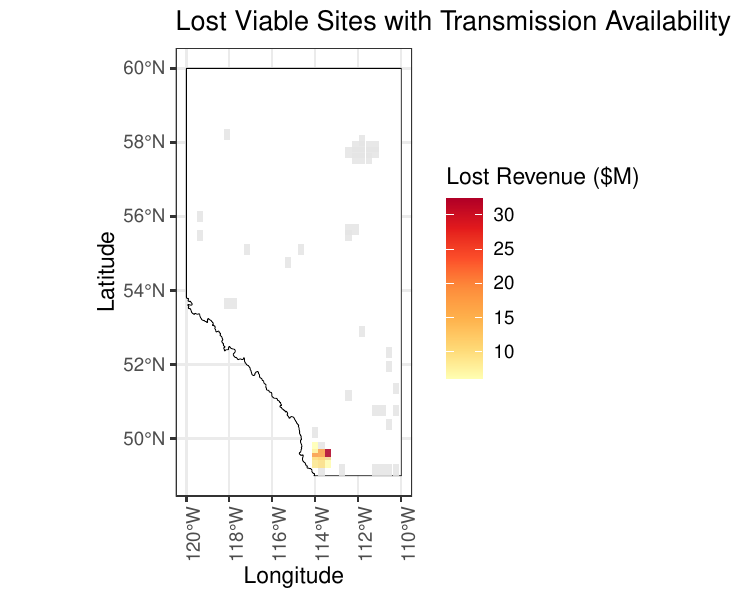}
\caption{Spatial distribution of viable wind development sites lost due to land restrictions under existing transmission constraints. Grey regions indicate retained viable sites, while colored sites represent lost capacity, shaded by revenue impact using a sequential yellow-orange-red scale. Site-level details are provided in Table~\ref{tab:lost_sites}.}
\label{fig:lost_sites}
\end{figure}

\begin{table}[ht]
\centering
\caption{Viable sites lost due to land restrictions under existing transmission constraints.}
\label{tab:lost_sites}
\begin{tabular}{cccc}
\hline
Latitude & Longitude & Capacity (MW) & Revenue (\$M) \\
\hline
49.557 & $-$113.399 & 300 & 32.36 \\
49.557 & $-$114.028 & 300 & 17.40 \\
49.557 & $-$113.713 & 300 & 16.80 \\
49.352 & $-$113.713 & 300 &  9.49 \\
49.352 & $-$114.028 & 300 &  8.58 \\
49.352 & $-$113.399 & 300 &  6.52 \\
49.760 & $-$114.028 & 300 &  6.14 \\
\hline
\multicolumn{3}{l}{\textbf{Total}} & \textbf{97.29} \\
\hline
\end{tabular}
\end{table}

The different scenarios explored here, which consider policy and infrastructure constraints, cover the short and long term opportunities for wind generation development. 

In the short term, before new transmission infrastructure could reasonably be built, the viewscape restrictions can be expected to be binding for the sites identified in Figure~\ref{fig:lost_sites}. These represent areas where transmission investments have already been made, but future development is not allowed.  

In the longer term, the viewscape restrictions have a more significant impact with additional viable areas in the north and east of the province off limits as showin in Figure~\ref{fig:lostrevenue}a.


Throughout the analysis we isolate only the physical impacts of the viewscape and transmission restrictions. As a result, the lost revenues calculated here are for the case where the total wind capacity still reaches 15,000 MW in all scenarios.  In doing so, we are explicitly excluding any impacts to wind development activity arising from policy risk assessed by renewable energy developers from the viewscape rules or the moratorium that proceeded it. The modeling is conducted using the existing market rules and excludes any changes such as increased transmission interconnection costs or other new rules proposed in the new Restructured Energy Market which are all out of scope for this analysis.

The static MFG theory is optimized assuming all locations are developed at the same time. In practice the build out of 15,000 MW of new capacity will take place over a number of years and revenues could be higher at a given site in the interim before correlated sites are developed. However, a given wind farm developer has no control over the decisions by future developers and it is therefore rational to plan their development assuming other projects may move forwards quickly. For centrally planned systems the MFG solution is particularly relevant for situations with significant rapid renewable development is planned, such as the one today where new wind generation is needed not just to meet growth but to displace higher emitting forms of generation.

The reward function we calculate is dependent on the wind covariance assuming the temporal shape of other supply as well as demand within the electricity grid remains largely fixed for times of wind production. Solar production is not likely to have a large impact given the relatively high degree of anti-correlation between the resources. However, significant future deployment of batteries and or demand side flexibility through load shifting (e.g., electric vehicle charging) could impact the reward function either positively or negatively. Future work could introduce adjustments to the historical data for determining the reward function that represent these potentially longer term changes to the market operations. But this longer term market uncertainty is not unique to the MFG solution.


The MFG approach demonstrated here represents a new approach to evaluating the economics of future wind generation development that could be applied to a number of challenges in electricity system planning. 

In the case of transmission planning, there is a clear need to increase access to transmission interconnection to develop new wind resources. Early efforts to enable wind generation through transmission build out such as the Competitive Renewable Energy Zones in Texas have focused on connecting areas with significant wind resources without consideration for covariance between sites. This approach is reasonable for early stages of wind development where limited amounts of wind generation make any covariance impacts small. However as the wind generation capacity increases it is more important to consider covariance to optimize costs and benefits. The geographical distribution of lost viable sites from transmission restrictions, Figure~\ref{fig:lostrevenue}b, shows not just where transmission is needed to access areas of significant wind resources, but the value of the resources, a key input to the transmission planning process.

Similarly, in the case of renewable energy auctions, there is an important challenge in enabling geographically optimal locations for development in a way that does not increase risk and therefore prices. The reward function could be used as a direct scaling function to renewable energy auction prices to represent the different value arising from co-variance. Unlike other solutions proposed such as benchmarks it would be fixed prior to the auction, thereby providing revenue certainty that does not increase financing costs above those in a geographically neutral auction format. 

Future work could integrate other variable resources such as solar, as well as evaluate the impact of supply and demand shifting through batteries or flexible demand.




\begin{figure}
  \centering
    \caption{}\label{fig1}
\end{figure}



\appendix

\section{Proofs}\label{app_proofs}

\subsection{Proof of Theorem~\ref{thm:epsilon_N-Nash}}

\begin{pf}
Proof of (i)
\newline
$[\tilde{\mu}_N(X^1),...,\tilde{\mu}_N(X^n)]$ is in a bounded domain since the total capacity at each location is bounded. Therefore, there exists a subsequence with a limit point such that:

$$[\tilde{\mu}_{N_m}(X^1),...,\tilde{\mu}_{N_m}(X^n)]\xrightarrow{} [\tilde{\mu}(X^1),...,\tilde{\mu}(X^n)]$$
and
$$[{\mu}_{N_m}(c^1,X^1),...,{\mu}_{N_m}(c^k,X^n)]\xrightarrow{} [{\mu}(c^1,X^1),...,{\mu}(c^k,X^n)]$$
\noindent
The limit $\mu$ must be a probability measure on $A$, and it satisfies the constraint~\eqref{eq:density_constraint}.

Proof of (ii)

Let $\delta=\min(\tilde{c}, \frac{\tilde{C}-\tilde{c}}{k})$ and $A_N:\{x :\tilde{\mu}_N(x) \geq C_{max} -\frac{\delta}{N}\}$.  Consider $x\in X_{max}$, i.e $\tilde{\mu}(x)=C_{max}$ and $c$ such that $\mu(c,x)>0$. Then $x\in A_{N_m}$ for all sufficiently large $m$, since $\tilde{\mu}_{N_m}\xrightarrow{}C_{max}$.

Let $\mu_{N}^i[a]$ be the probability measure obtained from $\mu_{N}$ by replacing the action of the $i^{th}$ agent with another action $a\in A$. We say that $ \mu_{N}^i[a]$ is feasible if it satisfies the density constraint ~\eqref{eq:density_constraint}.

Now, let $y$ be such that $\tilde{\mu}(y) < C_{\max}$. Then for all sufficiently large $m$, $y \in A_{N_m}^c$ since $\tilde{\mu}_{N_m}(y) \to \tilde{\mu}(y) < C_{\max}$. Furthermore, for sufficiently large $m$, $\mu_{N_m}^i[c', y]$ is feasible for all $c'\in \{c^1,\ldots c^k\}$ and for all $i$ with $X_i^{N_m} = x$.  Since $\mu_N$ is an $\epsilon_N$-Nash equilibrium, we have
\begin{eqnarray}\label{eNash}
\frac{1}{N_m}\sum_{i=1}^{N_m}C_i^{N_m} \tilde{F}(X_i^{N_m},\tilde{\mu}_{N_m})1_{\{X_i^{N_m}=x, C_i^{N_m}=c\}}\geq \frac{1}{N_m}\sum_{i=1}^{N_m}(c' \tilde{F}(y,\tilde{\mu}_{N_m}^i[c',y]-\epsilon_{N_m})1_{\{X_i^{N_m}=x,C_i^{N_m}=c\}},
\end{eqnarray}
where we $(C_i^N,X_i^N)$ denote the action of the $i^{th}$ agent for the $N$-player game.
Taking $m \to \infty$,  $\epsilon_{N_m} \to 0$ and using continuity of $F$ as $\mu^{N_m} \to \mu$:
\begin{equation*}
    F((c, x), \mu) \geq F((c', y), \mu), 
\end{equation*}
where we used $\mu(c,x)>0$ to deduce that $\lim_{m\rightarrow\infty}\frac{1}{N_m}\sum_{i=1}^{N_m}1_{\{X_i^{N_m}=x,C_i^{N_m}=c\}}=\mu(c,x)>0$.

Similarly, we can argue that for sufficiently large $m$, $\mu^i_{N_m}[c',x]$ is feasible and inequality \eqref{eNash} holds with $\mu^i_{N_m}[c',x]$ on the right side,  and letting $m\rightarrow\infty$ gives
\begin{equation*}
    F((c, x), \mu) \geq F((c', x), \mu), 
\end{equation*}
for $c'<c$.

Proof of (iii)

Let $x \notin X_{\max}$, so $\tilde{\mu}(x) < C_{\max}$, 
and let $c$ be such that $\mu(c, x) > 0$.  Let $i$ be an agent such that $X_i^{N_m} = x$ and $C_i^{N_m}=c$.

Again for any $y \notin X_{max}$, 
for all sufficiently large $m$ we have $y \in A_{N_m}^c$, 
meaning $\mu_{N_m}^i[c', y]$ is feasible for any 
$c' \in \mathcal{C}$. 
$X_i^{N_m} = x$.

The rest of the argument is similar to part (i).  We first establish inequality \eqref{eNash} for any $(c',y)$ with $y\notin A_{N_m}$ and we let $m\rightarrow\infty$ to get
\begin{equation*}
    F((c,x),\mu) \geq F((c',y'),\mu) 
    \quad \forall\,(c',y') \text{ with } 
    \tilde{\mu}(y') < C_{\max}.
\end{equation*}
Since this holds for all such $(c',y')$:
\begin{equation*}
    F((c,x),\mu) \geq 
    \sup_{[c' \in \mathcal{C},\, y' \notin X_{\max}]} 
    F((c',y'),\mu).
\end{equation*}
Also, trivially,
\begin{equation*}
    F((c,x),\mu) \leq 
    \sup_{[c' \in \mathcal{C},\, y' \notin X_{\max}]} 
    F((c',y'),\mu).
\end{equation*}
Combining both directions:
\begin{equation*}
    F((c,x),\mu) = 
    \sup_{[c' \in \mathcal{C},\, y' \notin X_{\max}]} 
    F((c',y'),\mu). \qed
\end{equation*}
\end{pf}

\subsection{Proof of Theorem~\ref{thm:MFE}}
\vspace{.2in}
\begin{pf}
Let $X_{max}=\{x :\tilde{\mu}(x) = C_{max}\}$.  Then
\begin{eqnarray}
\sum_{x\in X_{max}} \tilde{F}(x,\tilde{\mu})\tilde{\mu}(x)-\sum_{x\in X_{max}} \tilde{F}(x,\tilde{\mu})\tilde{m}(x)&\geq& \inf_{x\in X_{max}}(\tilde{F}(x,\tilde{\mu})) \sum_{x\in X_{max}} (\tilde{\mu}(x)-\tilde{m}(x) )\label{ineq1}
\end{eqnarray}
since $\tilde{m}(x)\leq C_{max}=\tilde{\mu}(x)$ for $x\in X_{max}$.
On the other hand,
\begin{eqnarray*}
\sum_{x\in X_{max}^c} \tilde{F}(x,\tilde{\mu})\tilde{\mu}(x)&=& \sup_{x\in X_{max}^c} \tilde{F}(x,\tilde{\mu}) \sum_{x\in X_{max}^c} \tilde{\mu}(x),
\end{eqnarray*}
and 
\begin{eqnarray*}
\sum_{x\in X_{max}^c} \tilde{F}(x,\tilde{\mu})\tilde{m}(x)&\leq& \sup_{x\in X_{max}^c} \tilde{F}(x,\tilde{\mu}) \sum_{x\in X_{max}^c} \tilde{m}(x)
\end{eqnarray*}
which means
\begin{eqnarray}
\sum_{x\in X_{max}^c} \tilde{F}(x,\tilde{\mu})\tilde{\mu}(x)-\sum_{x\in X_{max}^c} \tilde{F}(x,\tilde{\mu})\tilde{m}(x)&\geq& \sup_{x\in X_{max}^c} \tilde{F}(x,\tilde{\mu}) \sum_{x\in X_{max}^c} (\tilde{\mu}(x)- \tilde{m}(x))
\end{eqnarray}
Adding up the two inequalties 
\begin{eqnarray*}
\sum_{x} \tilde{F}(x,\tilde{\mu})\tilde{\mu}(x)-\sum_{x} \tilde{F}(x,\tilde{\mu})\tilde{m}(x)\\
&\geq& \inf_{x\in X_{max}}(\tilde{F}(x,\tilde{\mu}) \sum_{x\in X_{max}} (\tilde{\mu}(x)-\tilde{m}(x))-\sup_{x\in X_{max}^c}(\tilde{F}(x,\tilde{\mu}))\sum_{x\in X_{max}^c} (\tilde{m}(x)-\tilde{\mu}(x))\\
&=&( \inf_{x\in X_{max}}(\tilde{F}(x,\tilde{\mu})-\sup_{x\in X_{max}^c }(\tilde{F}(x,\tilde{\mu}))\sum_{x\in X_{max}} (\tilde{\mu}(x)-\tilde{m}(x))\\
&\geq& 0,
\end{eqnarray*}
since $0\leq \sum_{x\in X_{max}} (\tilde{\mu}(x)-\tilde{m}(x))= \sum_{x\in X_{max}^c} (\tilde{m}(x)-\tilde{\mu}(x))$ and $\inf_{x\in X_{max}}\tilde{F}(x,\tilde{\mu})-\sup_{x\in X_{max}^c }\tilde{F}(x,\tilde{\mu})>0$ since $\mu$ is an MFG solution.

\end{pf}

\subsection{Proof of Lemma~\ref{thm:Farkas_Lemma}}

\begin{pf}

Let 
\[
D_1 = \left\{ \mathbf{m} \,\middle|\, m_1  + \cdots + m_{kn} = 1 \text{ , } m_i \geq 0,\ i = 1, 2, \ldots, kn \right\}
\]
Let 
\[
D_2 = \left\{ \mathbf{w} \,\middle|\, c^1 \leq w_1 + w_2 + \dots + w_n \leq c^k\text{ ,  } 0 \leq w_i \leq c_k ,\ i = 1, \ldots, n \right\}
\]

\noindent Let $A \in \mathbb{R}^{(n+1) \times (kn)}$ and $b \in \mathbb{R}^{n+1}$ defined as follows.

\[
A = \begin{pmatrix}
c_1 & 0 & \dots & 0 & c_2 & 0 & \dots & 0 & c_3 & 0 & \dots & 0 & c_k & 0 & \dots & 0 \\
0 & c_1 & \dots & 0 & 0 & c_2 & \dots & 0 & \dots & c_3 & 0 & 0 & \dots & c_k & 0 & 0 \\
\vdots & \vdots & \vdots & \vdots & \vdots & \vdots & \vdots & \vdots & \vdots & \vdots & \vdots & \vdots & \vdots & \vdots & \vdots & \vdots \\
0 & 0 & \dots & c_1 & 0 & 0 & \dots & c_2 & 0 & 0 & \dots & c_3 & 0 & \dots & \dots & c_k \\
1 & 1 & \dots & 1 & 1 & 1 & \dots & 1 & 1 & 1 & \dots & 1 & 1 & \dots & \dots & 1
\end{pmatrix}
\]

\[
\begin{array}{c@{\hskip 2cm}c}
\mathbf{m} = 
\begin{pmatrix}
m_1 \\ m_2 \\ \vdots \\ m_{kn}
\end{pmatrix}
&
\mathbf{b} = 
\begin{pmatrix}
w_1 \\ w_2 \\ \vdots \\ w_n \\ 1
\end{pmatrix}
\end{array}
\]

\vspace{0.1cm}
Then by Farkas Lemma exactly one of the following two assertions is true:
\begin{enumerate}
    \item There exists   $\mathbf{m} \in \mathbb{R}^{kn}$  such that $A\mathbf{m} = \mathbf{b}$ and $\mathbf{m}\geq 0$,
    \item There exists $y \in \mathbb{R}^{n+1}$ such that $A^T y \geq 0$ and $\mathbf{b}^T y < 0$.
\end{enumerate}

We note that equivalence of $D_1$ and $D_2$ is equivalent to showing that the first assertion above is true, hence 
we need to show that the second condition is not true.


Consider $A^\top y \geq 0$, this implies:

\[
\begin{pmatrix}
c_1 & 0 & 0 & \cdots & 1 \\
0 & c_1 & 0 & \cdots & 1 \\
0 & 0 & c_1& \cdots & 1 \\
\vdots & \vdots & \vdots & \vdots & \vdots \\
0 & 0 &\cdots& c_1 & 1 \\
c_2 & 0 & 0 & \cdots & 1 \\
0 & c_2 & 0 & \cdots & 1 \\
\vdots & \vdots & \vdots & \vdots & \vdots\\
c_k & 0 & 0 & \cdots & 1\\
\vdots & \vdots & \vdots &\vdots& \vdots \\
0 & 0 & \cdots & c_k & 1
\end{pmatrix}
\begin{pmatrix}
y_1 \\ y_2 \\ y_3 \\ \vdots\\ y_{n+1}
\end{pmatrix}
=
\begin{pmatrix}
c_1 y_1 + y_{n+1} \\
c_1 y_2 + y_{n+1} \\
c_1 y_3 + y_{n+1} \\
\vdots \\
c_1 y_{n} + y_{n+1}\\
c_2 y_1 + y_{n+1} \\
c_2 y_2 + y_{n+1} \\
\vdots \\
c_k y_{1} + y_{n+1} \\
\vdots \\
c_k y_{n} + y_{n+1}
\end{pmatrix}
\geq
\begin{pmatrix}
0 \\ 0 \\ 0 \\ 0 \\ 0 \\ \vdots \\ 0
\end{pmatrix}
\]

We need to show that $\mathbf{b}^\top y < 0$ is not true.
We compute:

\[
\mathbf{b}^\top y = 
\begin{pmatrix}
w_1 & w_2 & w_3 & \cdots  & w_{n} & 1
\end{pmatrix}
\begin{pmatrix}
y_1 \\ y_2 \\ y_3 \\ \cdots \\ y_{n+1}
\end{pmatrix}
= \sum_{i=1}^{n} w_i y_i  + y_{n+1}
\]

\textbf{Consider two cases:}
\vspace{0.2cm}

\textbf{\underline{Case 1:}} $y_{n+1} \geq 0$

\vspace{0.1cm}
To satisfy $A^\top y \geq 0$:
\[
c_k y_1 + y_{n+1} \geq 0 \quad \Rightarrow \quad y_1 \geq -\frac{y_{n+1}}{c_k}
\]

Similarly,
\[
y_2 \geq -\frac{y_{n+1}}{c_k}, \quad y_3 \geq -\frac{y_{n+1}}{c_k}, \quad \cdots \quad , \quad y_{n} \geq -\frac{y_{n+1}}{c_k}
\]
Then:
\begin{align*}
w_1 y_1 + w_2 y_2 + \cdots + w_n y_n + y_{n+1} 
&\geq -\frac{w_1 y_{n+1}}{c_k} - \frac{w_2 y_{n+1}}{c_k} - \cdots \\
&\quad - \frac{w_n y_{n+1}}{c_k} + y_{n+1} \\
&= \left(1 - \frac{w_1 + w_2 + \cdots + w_n}{c_k} \right) y_{n+1}
\end{align*}

Since $w_1 + w_2 + \cdots +  w_{n} \leq c_{k}$ by definition, the expression in parentheses is non-negative, so:

\[
\left(1 - \frac{w_1 + w_2 + w_3 + \cdots + w_n}{c_k} \right) y_{n+1} \geq 0
\]

Hence, $\mathbf{b}^\top y \geq 0$, and the second assertion is false.

\textbf{\underline{Case 2:}}  $y_{n+1} < 0$

To satisfy $A^\top y \geq 0$:
\[
c_1 y_1 + y_{n+1} \geq 0 \quad \Rightarrow \quad y_1 \geq -\frac{y_{n+1}}{c_1}
\]

Similarly,
\[
y_2 \geq -\frac{y_{n+1}}{c_1}, \quad y_3 \geq -\frac{y_{n+1}}{c_1}, \quad \cdots \quad , \quad y_{n} \geq -\frac{y_{n+1}}{c_1}
\]
Then:
\begin{align*}
w_1 y_1 + w_2 y_2 + \cdots + w_n y_n + y_{n+1} 
&\geq -\frac{w_1 y_{n+1}}{c_1} - \frac{w_2 y_{n+1}}{c_1} - \cdots \\
&\quad - \frac{w_n y_{n+1}}{c_1} + y_{n+1} \\
&= \left(1 - \frac{w_1 + w_2 + \cdots + w_n}{c_1} \right)y_{n+1}
\end{align*}

Since $c_1 \leq w_1 + w_2 + \cdots +  w_{n} $ by definition, the expression in parentheses is negative, so:

\[
\left(1 - \frac{w_1 + w_2 + w_3 + \cdots + w_n}{c_1} \right) y_{n+1} \geq 0
\]

Hence, $\mathbf{b}^\top y \geq 0$, and the second assertion is false.
\bigskip

Since the second assertion is false, then the first assertion must be true and the proof is complete.

\end{pf}

\printcredits

\section*{Acknowledgements}
Climate and environmental data were obtained from the Alberta Climate 
Information Service (ACIS), provided by Agriculture and Agri-food Canada 
(AGI) and its partners, Environment and Parks and Environment Canada 
\citep{ACIS2024}. Data available at \url{www.agriculture.alberta.ca/acis}.

\bibliographystyle{cas-model2-names}

\bibliography{mfgrefs}



\end{document}